\newtheorem{theorem}{Theorem}[section]
\newtheorem{proposition}[theorem]{Proposition}
\newtheorem{corollary}[theorem]{Corollary}
\newtheorem{rem}[theorem]{Remark}
\newtheorem{example}[theorem]{Example}
\newtheorem{mydef}[theorem]{Definition}
\newcommand\qed{{\hspace*{\fill}$\Box$\vskip12pt plus 1pt}}
\newenvironment{proof}{{\noindent\bf Proof.\ }}{\qed}
\newcommand\bC{{\mathbb C}}
\newcommand\bP{{\mathbb P}}
\newcommand\bR{{\mathbb R}}
\DeclareMathOperator*{\trace}{\rm trace}
\DeclareMathOperator*{\rank}{\rm rank}
\DeclareMathOperator*{\conj}{\rm conj}
\DeclareMathOperator*{\imag}{\rm imag}
\begin{document}
\title{Adaptive strategies for solving parameterized systems \\
using homotopy continuation}

\author{
Jonathan D. Hauenstein\thanks{Department of Applied and Computational Mathematics and Statistics,
University of Notre Dame, Notre Dame, IN 46556 (hauenstein@nd.edu, \url{www.nd.edu/\~jhauenst}).
This author was supported in part by 
Army YIP W911NF-15-1-0219, 
Sloan Research Fellowship BR2014-110 TR14, 
NSF grant ACI-1460032, and
ONR N00014-16-1-2722.
}
\and
Margaret H. Regan\thanks{Department of Applied and Computational Mathematics and Statistics,
University of Notre Dame, Notre Dame, IN 46556 (mregan9@nd.edu,
\url{www.nd.edu/\~mregan9}).
This author was supported in part by 
Schmitt Leadership Fellowship in Science and Engineering
and NSF grant ACI-1440607.}
}

\date{\today}

\maketitle

\begin{abstract}
\noindent 
Three aspects of applying homotopy continuation,
which is commonly used to solve parameterized systems of polynomial equations, are investigated.
First, for parameterized systems which are homogeneous,
we investigate options for performing computations
on an adaptively chosen affine coordinate patch.  
Second, for parameterized systems which are overdetermined,
we investigate options for adaptively 
selecting a well-constrained subsystem to restore numerical 
stability.  Finally, since one is typically interested in only computing real solutions for parameterized problems which arise from applications, we investigate a scheme for 
heuristically identifying solution paths which appear to be 
ending at nonreal solutions and truncating them.  
We demonstrate these three aspects on 
two problems arising in computer vision.  

\noindent {\bf Keywords}. Numerical algebraic geometry, 
homotopy continuation, parameter homotopy, 
over\-determined system, algebraic vision

\noindent{\bf AMS Subject Classification.} 65H10, 68W30, 14Q99
\end{abstract}

% 65H10 Systems of nonlinear equations
% 68W30 Symbolic computation and algebraic computation
% 14Q99 Computational aspects in algebraic geometry

\section{Introduction}\label{Sec:Intro}

Parameterized systems of polynomial equations 
arise in many applications including computer vision \cite{Irschara,Leibe,Snavely},
chemistry \cite{ChemApp1,AlKhateeb}, and kinematics \cite{3RSynthesis,Wampler}.  For a general setup, we
assume that~$F(x;p)$ is a system which is
polynomial in the variables \mbox{$x\in\bC^N$} and 
analytic in the parameters \mbox{$p\in\bC^P$}.
Typically, one is interested in 
efficiently computing the solutions for many 
instances of the parameters. For example, in computer vision,
algorithms are used to solve the same system at many parameter values in order to employ the RANSAC algorithm \cite{Kukelova,RANSAC}.

An approach to repeatedly solve many instances
of a parameterized system is to utilize a 
so-called Gr\"obner trace approach, e.g., see \cite{Kukelova}.  That is, one first performs algebraic
manipulation of the equations at a randomly selected parameter value to discover how to reduce the corresponding
system to a Gr\"obner basis from which the solutions can be efficiently extracted.  The identification
of the algebraic manipulation steps form the {\em ab initio phase} 
and is completed ``offline.''
The ``online'' phase is to repeat these same 
manipulations for each given parameter instance.  
This approach can lead to efficient
solvers in computer vision with 
many potential problems including the 
need for specialized software,
a propagation of errors, instability, 
and expensive computations \cite{Kukelova}.

Rather than employ algebraic manipulation, we consider
using homotopy continuation in the form 
of a parameter homotopy \cite{CoeffParam} (see also \cite[Chap. 6]{BHSW:BertiniBook}).
To employ a parameter homotopy, the ``offline'' {\em ab initio phase}
computes all solutions at a randomly selected parameter value, say $p^*$.  The ``online'' {\em parameter homotopy phase}
tracks the solution paths using homotopy continuation 
as~$p^*$ is deformed to the given parameter instance,
say $\hat{p}$, 
which is shown schematically in Figure~\ref{Fig:ParamHom}.

\begin{figure}
	\centering
	\includegraphics[scale = 0.5]{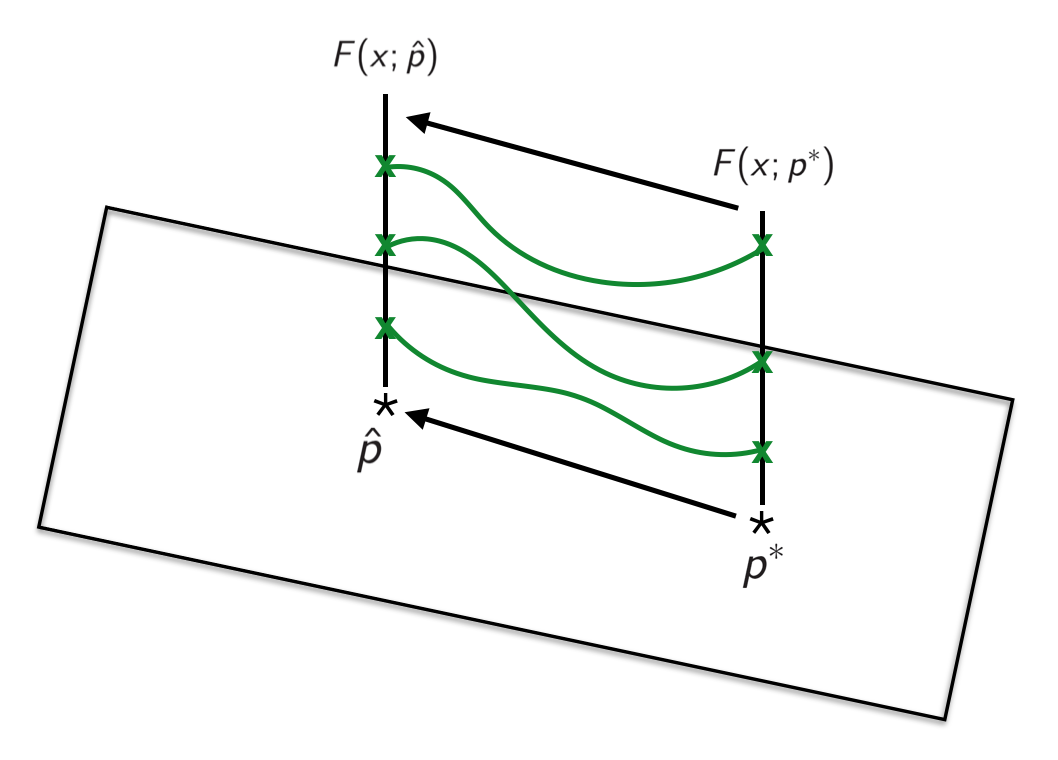}
	\caption{Parameter homotopy phase
starting with the solutions of $F(x;p^*) = 0$
and ending with the solutions of $F(x;\hat{p}) = 0$.}\label{Fig:ParamHom}
\end{figure}

Although parameter homotopies have been used
to solve many instances of a parameterized system \cite{Paramotopy,BHSW:Bertini}, there are three aspects of this 
computation that warrant further consideration.
First, it is common for the parameterized system 
$F(x;p)$ to be homogeneous with respect to the
variables~\mbox{$x$} so that one treats the solutions
as points in the projective space $\bP^{N-1}=\bP(\bC^N)$.
For example, $3\times3$ essential matrices in 3D 
image reconstruction are naturally 
considered as points in \mbox{$\bP^8 = \bP(\bC^{3\times3}) = \bP(\bC^9)$}~\cite{Demazure}.  
For problems which are naturally formulated affinely, 
projective space is used to improve the solving process,
particularly for solutions with large norm
as well as handling nongeneric 
parameter values which have solutions
at infinity \cite{Morgan}.
Computationally, one natural approach for handling 
projective space is to utilize an affine coordinate patch.
We will consider three strategies for selecting 
an affine patch: a fixed coordinate patch which is used throughout the computation \cite{Morgan} (Section~\ref{Sec:GlobalRandomPatch}), 
an adaptive orthogonal patch \cite{ShubSmale} (Section~\ref{Sec:Orthogonal}), and an adaptive coordinate-wise 
patch proposed in Section~\ref{Sec:LocalCoordPatch}.

Second, it is common for 
parameterized problems to be overdetermined.
For example, the set of essential matrices
in computer vision consists of $3\times 3$ matrices 
of rank $2$ where the two nonzero singular values are 
equal.  Since scaling is irrelevant, as mentioned above, 
this set is naturally defined on $\bP^8$ by the 
vanishing of the determinant and the
$9$ cubic Demazure polynomials \cite{Demazure}, namely
\begin{equation}\label{eq:Demazure}
2EE^TE - \trace(EE^T)E = 0.
\end{equation}
This system of 10 polynomials is overdetermined since it
defines an irreducible set of codimension $3$.
Due to the numerical instability of solving 
overdetermined systems, we explore three techniques for reducing down to solving well-constrained systems: a fixed global randomization \cite{NAG} (Section~\ref{Sec:FixedRandomization}), an adaptive pseudoinverse randomization proposed in Section~\ref{Sec:PseudoinverseRandomization}, 
and an adaptive leverage score randomization
proposed in Section~\ref{Sec:LeverageRandomization}.

Third, when solving parameterized problems arising 
from applications, typically only the real solutions
are of interest.  That is, one need not compute
the nonreal endpoints of solution paths defined by a homotopy.
We propose a heuristic strategy in Section~\ref{Sec:Truncation} 
for identifying and truncating paths which appear to be 
ending at nonreal solutions thereby saving computational time.

The remainder of the paper is as follows.  
Section~\ref{Sec:ParameterHomotopies} provides a short introduction
to parameter homotopies and path tracking with more details
provided in \cite{BHSW:BertiniBook,SW:Book}.
Section~\ref{Sec:Patch}
compares the three strategies for affine patches
while Section~\ref{Sec:Randomization} compares
the three strategies for randomizing down
to a well-constrained subsystem.  
Section~\ref{Sec:Algorithm} presents
pseudocode for the path tracking methods with 
Section~\ref{Sec:Truncation} presenting
our heuristic truncation scheme for nonreal solutions.
We compare all of the approaches on two applications
in computer vision in Section~\ref{Sec:Vision}.
The paper concludes in Section~\ref{Sec:Conclusion}.

\section{Parameter homotopies and path tracking}\label{Sec:ParameterHomotopies}

Throughout, we assume that the parameterized system 
$F(x;p)$ is polynomial in the variables $x\in\bC^N$
and analytic in the parameters $p\in\bC^P$.  
This setup ensures that the number 
of nonsingular isolated solutions
of $F = 0$ has a generic behavior with respect to
the parameter space $\bC^P$, e.g., \cite[Thm.~7.1.5]{SW:Book}.
This enables path tracking on the parameter space
via a parameter homotopy \cite{CoeffParam} described below.
We note that one could also consider positive-dimensional components using linear slicing and singular isolated solutions
using deflation techniques, e.g., \cite{Deflation1,IsosingularDeflation},
to reduce to the nonsingular isolated case.  

For generic $p^*\in\bC^P$, the {\em ab initio phase} 
of parameter homotopy continuation is to compute
the isolated nonsingular solutions of $F(x;p^*) = 0$.
This can be accomplished, for example, 
using standard homotopy continuation \cite{BHSW:BertiniBook,SW:Book} which 
is a computation that is performed~once~``offline.''

\begin{figure}[!b]
	\centering
	\includegraphics[scale=0.5]{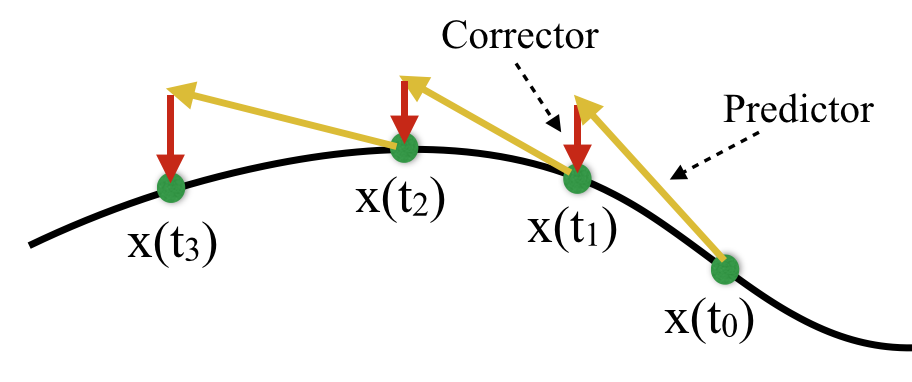}
	\caption{A schematic view of predictor-corrector tracking along a solution path.}\label{fig:Tracking}
\end{figure}

Given $\hat{p}\in\bC^P$, the ``online'' part is 
called the {\em parameter homotopy phase} 
which computes the isolated nonsingular solutions
of $F(x;\hat{p}) = 0$ using the parameter homotopy 
\begin{equation}\label{eq:ParameterHomotopy}
H(x,t) = F(x;tp^* + (1-t)\hat{p}).
\end{equation}
In particular, for each nonsingular solution $x^*$ 
of $F(x;p^*) = 0$, one considers the solution path
$x(t)$ defined by $x(1) = x^*$ and $H(x(t),t) \equiv 0$.
Genericity of $p^*$ ensures that each solution path $x(t)$
is smooth for $t\in(0,1]$ and satisfies
the Davidenko differential equation
\begin{equation}\label{eq:Davidenko}
J_x H(x,t)\cdot\dot{x}(t) = -J_t H(x,t)
\end{equation}
where $J_x H(x,t)$ and $J_t H(x,t)$ are the Jacobian
matrix with respect to $x$ and Jacobian vector
with respect to $t$, respectively.
Hence, one can employ a predictor-corrector tracking strategy
starting with the initial value $x(1) = x^*$
to compute $x(0)$.  
A schematic view of predictor-corrector
tracking is provided in Figure~\ref{fig:Tracking}
with more details provided in \cite{BHSW:BertiniBook,SW:Book}.
In particular, the predictor follows from the 
differential equation~\eqref{eq:Davidenko} 
while the corrector uses the fact that 
$H(x(t),t) \equiv 0$.  Since, for $t\in(0,1]$,
$x(t)$ is a nonsingular isolated solution of
$H(\bullet,t) = 0$, Newton's method is 
locally quadratically convergent.
Hence, our computations will utilize classical
$4^{\rm th}$ order Runge-Kutta prediction method 
with the corrector being several iterations Newton's method.

\section{Affine patches}\label{Sec:Patch}

The projective space $\bP^N$ is the set of lines
in $\bC^{N+1}$ passing through the origin.  
In particular, there is a choice to be made 
for performing computations on $\bP^N$ due
to selecting a representation of each point in $\bP^N$.
One standard approach is to utilize
an affine coordinate patch where a Zariski open dense 
subset of $\bP^N$ is represented by a hyperplane in $\bC^{N+1}$
as illustrated in the following.

\begin{example}\label{ex:TwistedCubic}
Consider intersecting the twisted cubic curve $C\subset\bP^3$ 
with the hyperplane defined by $x_0 + x_1 + x_2 + x_3 = 0$,
namely computing the three solutions on $\bP^3$ 
of the polynomial system
\begin{equation}\label{eq:TwistedCubic}
f(x) = \left[\begin{array}{c} 
x_0 x_2 - x_1^2 \\ x_1 x_2 - x_0 x_3 \\ x_1 x_3 - x_2^2 \\
x_0 + x_1 + x_2 + x_3 \end{array}\right] = 0
\end{equation}
which are 
\begin{equation}\label{eq:ThreePoints}
[1, -1,  1, -1],~[ 1,  i, -1, -i],~[ 1, -i, -1,  i] \in\bP^3
\end{equation}
where $i = \sqrt{-1}$.  
In \eqref{eq:ThreePoints}, each point in $\bP^3$ 
is represented by a unique vector in $\bC^4$ 
using the affine coordinate patch
defined by \mbox{$x_0 = 1$}, i.e., 
represented uniquely in the form $[1,x_1,x_2,x_3]$.
The set of points in $[x_0,x_1,x_2,x_3]\in\bP^3$ which 
cannot be represented in this way is the hyperplane $x_0 = 0$.
\end{example}

The key to selecting an affine coordinate patch
is to make sure that every projective point of interest,
e.g., every point along every homotopy solution path,
has a representation in that affine patch.
For example, the first point in \eqref{eq:ThreePoints} 
cannot be represented in the affine coordinate
patch defined by $x_0 + x_1 = 1$.  

The following describes three strategies for selecting an
affine patch.  The first uses a fixed general affine patch \cite{Morgan} while the second and third utilize a locally adapted orthogonal \cite{ShubSmale} and coordinate-wise patching strategy, respectively.

\subsection{Fixed general affine patch}\label{Sec:GlobalRandomPatch}

The approach presented in \cite{Morgan} uses a general 
affine coordinate patch.  
That is, for a general $v\in\bC^{N+1}$,
one performs all computations on
the fixed affine coordinate patch defined by 
$$v\cdot x = v^Hx = 1$$
where $v^H$ is the Hermitian (conjugate) transpose of $v$.
See Figure~\ref{fig:Patching}(a) for a schematic view.

\begin{figure}[!t]
\centering
\begin{subfigure}{.3\textwidth}
  \centering
  \includegraphics[scale = 0.5]{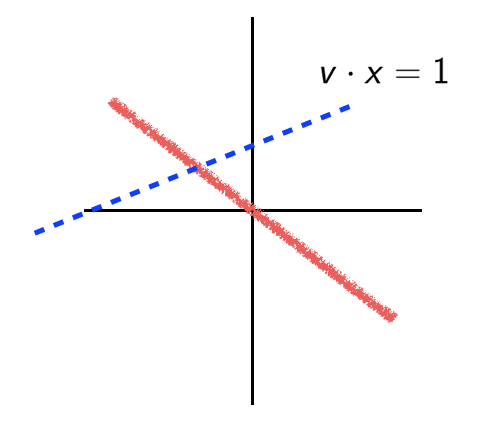}
\end{subfigure}
\begin{subfigure}{.3\textwidth}
  \centering
  \includegraphics[scale = 0.5]{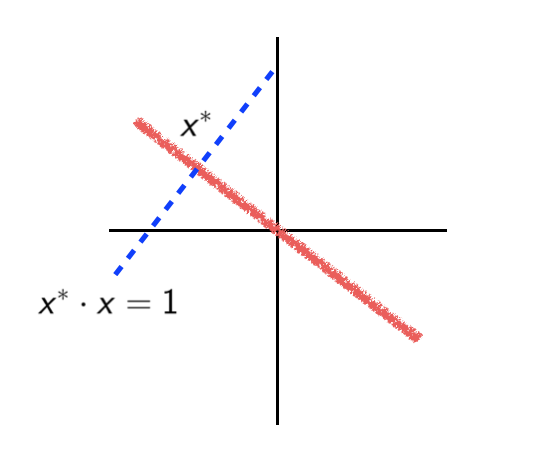}
\end{subfigure}
\begin{subfigure}{.3\textwidth}
  \centering
  \includegraphics[scale = 0.5]{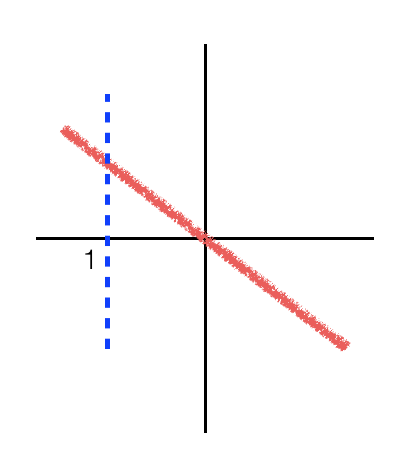}
\end{subfigure} \\
(a) \hspace{1.7in} (b)\hspace{1.7in}  (c)
\caption{Schematic drawing of (a) fixed affine patch, (b) orthogonal affine patch, and (c)~coordinate-wise affine patch.}
\label{fig:Patching}
\end{figure}

The advantage of using a fixed affine coordinate patch
is that it is chosen at the beginning and is
fixed throughout the computations.  
Thus, one could perform computations implicitly
on the patch which removes one of the variables, 
e.g., remove the variable
$x_0$ via
$$x_0 = \frac{1}{\conj(v_0)}\left(1 - \conj(v_1) x_1 - \cdots - \conj(v_N) x_N\right).$$

The disadvantage of using a fixed patch is that
ill-conditioning could artificially be introduced
as shown in the following.

\begin{example}\label{ex:TwistedCubic2}
With $f(x)$ as in \eqref{eq:TwistedCubic}, consider
the system of equations
\begin{equation}\label{eq:TwistedCubicG}
g(x) = \left[\begin{array}{c} f(x) \\
v \cdot x - 1 \end{array}\right] = 0.
\end{equation}
Table~\ref{tab:CN} shows the condition number
with respect to the $2$-norm 
of the Jacobian matrix of~$g$ using various 
vectors $v$ at the solution corresponding to
$[1,-1,1,-1]$ in the respective
affine patches.

\begin{table}[!hb]
\centering
\begin{tabular}{c|c|c|c}
$v$ & $(1,0,0,0)$ & $(0.8695,0.4670,-0.0231,0.1592)$ & 
$(0.1947,0.3999,-0.5268,-0.7243)$ \\
\hline
CN & 10.2 & 158.2 & 113,574.2 
\end{tabular}
\caption{Condition number of the Jacobian matrix of $g$
with respect to different coordinate patches}
\label{tab:CN}
\end{table}
\end{example}

One can attempt to limit this artificial ill-conditioning
by using a locally selected 
affine coordinate patch, i.e.,
one which is adapted to the current point to
be represented.  The next two subsections 
consider two methods for selecting local patches.

\subsection{Orthogonal affine patches}\label{Sec:Orthogonal}

In \cite{ShubSmale}, computations are performed
locally in the Hermitian orthogonal complement
of a point in projective space
which Shub and Smale say can be 
considered as the tangent space of $\bP^N$.
To fix notation, assume that $x^*\in\bC^{N+1}\setminus\{0\}$
such that $[x^*]\in\bP^N$ is the current point
in projective space under consideration.  
To avoid computations on vectors which are too
large or too small, we will assume that 
$x^*\cdot x^* = \|x^*\|_2^2 = 1$.  Thus,
with this setup, the orthogonal affine patch is 
defined by
$$x^*\cdot x = 1.$$
If $y^*\in\bC^{N+1}$ is the another point on this affine 
patch, let $\Delta x = y^* - x^*$.  Thus, 
$\Delta x \cdot x^* = 0$ so 
that~$\Delta x$ is orthogonal to $x^*$ giving the method its name.
See Figure~\ref{fig:Patching}(b) for a schematic view.

\begin{example}\label{ex:TwistedCubic3}
Let $g(x)$ be as in \eqref{eq:TwistedCubicG}
and $x^* = (1/2,-1/2,1/2,-1/2)$
so that $x^*\cdot x^* = 1$ and $[x^*]=[1,-1,1,-1]$.
Then, the condition number of the Jacobian matrix
of $g$ with respect to the~$2$-norm 
with the affine patch defined by $x^*\cdot x = 1$
is $4.37$.
\end{example}

When path tracking, one uses an orthogonal affine
coordinate patch based on the current point on the path 
and performs a predictor-corrector step in that patch.
If the step is successful, the patch is updated
based on the new point on the path (see Section~\ref{Sec:Algorithm}).

The advantage of using an orthogonal affine coordinate
patch is the typically well-controlled condition number.  
When using a fixed patch as in Section~\ref{Sec:GlobalRandomPatch},
one can globally remove a variable.  With a locally
adapting patch, one is able to locally remove a variable
which will typically depend upon all of the other variables.
The next method uses a locally adapted patch that fixes
one~variable.

\subsection{Coordinate-wise affine patches}\label{Sec:LocalCoordPatch}

Coordinate-wise affine patches have the form $x_j = 1$
for some $j\in\{0,\dots,N\}$.  For example, 
the points in \eqref{eq:ThreePoints}
from Ex.~\ref{ex:TwistedCubic} are represented
using the coordinate-wise patch $x_0 = 1$.
The advantage of using such a coordinate-wise 
patch is the simplicity of setting a coordinate equal
to $1$.  One disadvantage could be having
the other coordinates be large
if the point is ``near'' the hyperplane 
at ``infinity,'' i.e., $x_j = 0$.
To overcome this, we locally adapt the selection 
of the coordinate $j$.  
That is, if $x^*\in\bC^{N+1}\setminus\{0\}$ corresponds
to $[x^*]\in\bP^N$, we can assume that $\|x^*\|_\infty = 1$
and select one coordinate $j$ such that $x^*_{j} = 1$.
Hence, the corresponding coordinate-wise
affine patch is 
$e_j\cdot x = 1$ where $e_j$
is the $j^{\rm th}$ standard coordinate vector.  
See Figure~\ref{fig:Patching}(c) for a schematic view.

\begin{example}\label{ex:TwistedCubic4}
Let $g(x)$ be as in \eqref{eq:TwistedCubicG}
with $[x^*] = [1,-1,1,-1]$.  
The condition number of the Jacobian matrix
of $g$ with respect to the $2$-norm 
is $10.2$ when 
using either $x_0 = 1$ or $x_3 = 1$ 
and $8.5$ when using either $x_1 = 1$ or $x_2 = 1$.
\end{example}

As with the orthogonal patch in Section~\ref{Sec:Orthogonal},
when path tracking, we utilize a local strategy which updates 
the coordinate $j$ defining the 
affine patch after each successful step (see Section~\ref{Sec:Algorithm}).
Although the condition number is typically not as small
as the orthogonal case, it is trivial 
to remove a variable since $x_j = 1$ which helps to reduce
the cost of linear algebra in taking a step.

\subsection{Optimal patching}\label{Sec:OptimalPatch}

The affine patches described in 
Sections~\ref{Sec:GlobalRandomPatch}-\ref{Sec:LocalCoordPatch}
are of the form $v\cdot x = 1$
for a vector $v\in\bC^{N+1}$.  Although one would like to minimize
the condition number of the Jacobian over all such vectors
\mbox{$v\in\bC^{N+1}$} for a given solution, we will efficiently approximate solving this large optimization problem 
by considering rescalings.  The following
demonstrates that this can yield improvements.

\begin{example}\label{ex:TwistedCubicPatch}
Reconsider the setup from Ex.~\ref{ex:TwistedCubic4}.
Fix $z^* = (1,-1,1,-1)$ and $\alpha = e_0$.
For \mbox{$\lambda \in \bC\setminus\{0\}$},
consider $v = \lambda \cdot \alpha$ and $x^* = z^* / \lambda$ 
so that the patch is simply defined by
$v\cdot x = \lambda\cdot x_0 = 1$, i.e., $x_0 = 1/\lambda$.
When $\lambda = 1$, Ex.~\ref{ex:TwistedCubic4}
showed that the condition number is $10.2$
with Figure~\ref{fig:Rescale} showing 
the condition number
as a function of $\lambda$.  When $\lambda = 1.7$,
the condition number decreases to $7.2$.  

\begin{figure}[!t]
\centering
  \includegraphics[scale = 0.28]{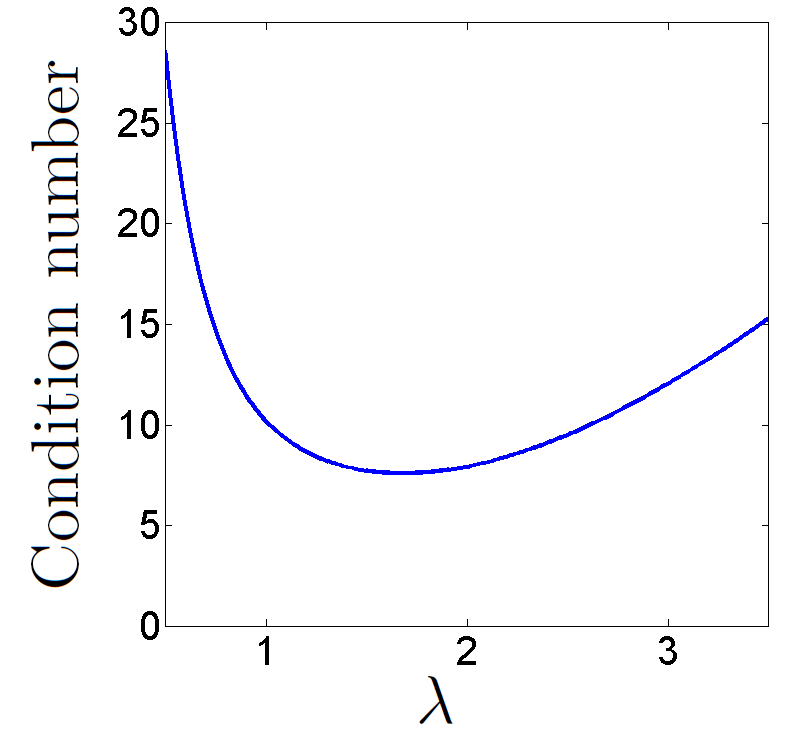}
\caption{Plot of condition number as a function of the scaling parameter $\lambda$.}
\label{fig:Rescale}
\end{figure}
\end{example}

Let $f(x)$ be a system of $N$ polynomials, each of degree $d > 0$, defined on $\bP^N$, 
$\alpha\in\bC^{N+1}\setminus\{0\}$, $\lambda > 0$,
and affine patch $v\cdot x = 1$ where $v = \lambda\cdot\alpha$.
Suppose that $z^*\in\bC^{N+1}$ with $\alpha\cdot z^* = 1$ 
such that $[z^*]\in\bP^N$ solves $f = 0$.
We aim to select the scaling $\lambda$ to
improve the conditioning at $x^* = z^*/\lambda$.
Since each $f_i$ is homogeneous of degree $d > 0$,
each entry of the gradient $\nabla f_i$ is 
either~$0$ or homogeneous of degree $d-1$.  Hence,
consider the $(N+1)\times(N+1)$ matrix
\begin{equation}\label{eq:M}
M(\lambda) = \lambda^{d-1} \left[\begin{array}{c}
\nabla f_1(z^*/\lambda) \\ \vdots \\
\nabla f_N(z^*/\lambda) \\ 
\lambda\cdot \alpha^H 
\end{array}\right]
= \left[\begin{array}{c}
\nabla f_1(z^*) \\ \vdots \\
\nabla f_N(z^*) \\ 
\lambda^d\cdot \alpha^H 
\end{array}\right]
= \left[\begin{array}{c} Jf(z^*) \\ \lambda^d\cdot \alpha^H
\end{array}\right]
\end{equation}
where $Jf(z^*)$ is the Jacobian matrix of $f$ evaluated at $z^*$.
To further simplify the computation, we consider
minimizing $\kappa_{\infty,1}(M(\lambda)) = \|M(\lambda)\|_\infty\cdot\|M(\lambda)^{-1}\|_1$ 
which the following shows can be minimized 
using data from $M(1)$ and~$M(1)^{-1}$.

\begin{theorem}\label{thm:Minimize}
If $M(\lambda)$ from \eqref{eq:M} is written
as $M(\lambda) = \left[\begin{array}{c} J \\ \lambda^d\cdot\alpha^H \end{array}\right]$
and $M(1)^{-1} = \left[\begin{array}{cc} K & \beta \end{array}\right]$ where $J\in\bC^{N\times(N+1)}$, $K\in\bC^{(N+1)\times N}$, and $\alpha,\beta\in\bC^{N}$, then $\kappa_{\infty,1}(M(\lambda))$
is minimized when 
$$\lambda = \sqrt[2d]{\frac{\|J\|_\infty\cdot\|\beta\|_1}{\|K\|_1\cdot\|\alpha\|_1}}.$$
\end{theorem}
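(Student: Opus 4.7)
The plan is to compute $\|M(\lambda)\|_\infty$ and $\|M(\lambda)^{-1}\|_1$ in closed form as functions of $\lambda$, and then minimize their product by a one-variable analysis. Since the $\infty$-norm is the maximum absolute row sum and the only row of $M(\lambda)$ depending on $\lambda$ is the last one, $\lambda^d\alpha^H$, we immediately obtain
$$\|M(\lambda)\|_\infty \;=\; \max\bigl(\|J\|_\infty,\ \lambda^d\|\alpha\|_1\bigr).$$

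The key step is to express $M(\lambda)^{-1}$ in terms of the blocks $K$ and $\beta$ of $M(1)^{-1}$. My claim is that
$$M(\lambda)^{-1} \;=\; \bigl[\,K \;\; \lambda^{-d}\beta\,\bigr].$$
To verify this, expand the four block identities implied by $M(1)M(1)^{-1} = M(1)^{-1}M(1) = I_{N+1}$, namely $JK = I_N$, $J\beta = 0$, $\alpha^H K = 0$, and $\alpha^H\beta = 1$. A direct computation gives
$$\bigl[\,K \;\; \lambda^{-d}\beta\,\bigr]\cdot M(\lambda) \;=\; KJ + (\lambda^{-d}\beta)(\lambda^d\alpha^H) \;=\; KJ + \beta\alpha^H \;=\; I_{N+1},$$
and a symmetric computation handles the other direction. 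Because the $1$-norm is the maximum absolute column sum, and the scaling by $\lambda^{-d}$ only affects the last column, this yields
$$\|M(\lambda)^{-1}\|_1 \;=\; \max\bigl(\|K\|_1,\ \lambda^{-d}\|\beta\|_1\bigr).$$

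To finish, set $a=\|J\|_\infty$, $b=\|\alpha\|_1$, $c=\|K\|_1$, $e=\|\beta\|_1$ and substitute $u = \lambda^d > 0$, reducing the problem to minimizing
$$\phi(u) \;=\; \max(a,\,ub)\cdot\max(c,\,e/u).$$
The first factor equals $a$ for $u\le a/b$ and $ub$ for $u\ge a/b$, while the second equals $e/u$ for $u\le e/c$ and $c$ otherwise. Splitting into the two cases $ac\le be$ and $ac>be$, one checks that $\phi$ is monotonically decreasing, constant on an interval between the two breakpoints (with plateau value $be$ or $ac$, respectively), then monotonically increasing. A short inequality check verifies that $u^\star = \sqrt{ae/(bc)}$ lies in the plateau in both cases, producing the claimed minimizer $\lambda = \sqrt[2d]{\|J\|_\infty\|\beta\|_1/(\|K\|_1\|\alpha\|_1)}$. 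The main obstacle is the block-inverse identity in the middle step, since everything downstream is routine once the explicit form of $M(\lambda)^{-1}$ is in hand; the piecewise optimization is simple because the scaling enters only through the single power $\lambda^d$.
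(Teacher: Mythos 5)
Your proof is correct and follows essentially the same route as the paper: verify the explicit inverse $M(\lambda)^{-1} = \left[\begin{array}{cc} K & \beta/\lambda^d \end{array}\right]$, read off $\|M(\lambda)\|_\infty$ and $\|M(\lambda)^{-1}\|_1$ as maxima of row and column sums, and minimize the resulting one-variable function of $\lambda^d$. Your piecewise analysis of $\max(a,ub)\cdot\max(c,e/u)$ is in fact slightly more careful than the paper's convexity argument, since it makes explicit that the minimum is attained on a plateau containing the stated value of $\lambda$ rather than only at that single point.
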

\begin{proof}
From $M(1)^{-1}$, it is easy to verify
that $M(\lambda)^{-1} = \left[\begin{array}{cc} K & \beta/\lambda^d \end{array}\right]$.  Hence,
$$\|M(\lambda)\|_\infty = \max\{\|J\|_\infty,~\lambda^d\cdot\|\alpha\|_1\}\hbox{~~~~and~~~~}
\|M(\lambda)^{-1}\|_1 = \max\{\|K\|_1,~\|\beta\|_1/\lambda^d\}
$$
so that
$$\kappa_{\infty,1}(M(\lambda)) = \max\{\|J\|_\infty\cdot\|\beta\|_1/\lambda^d,~\|J\|_\infty\cdot \|K\|_1,~\|\alpha\|_1\cdot\|\beta\|_1,~\lambda^d\cdot \|K\|_1\cdot\|\alpha\|_1\}.$$
Hence, $\kappa_{\infty,1}(M(\lambda))$ is a convex function 
such that $\kappa_{\infty,1}(M(\lambda)) = \|J\|_\infty\cdot \|\beta\|_1/\lambda^d$ for $0 < \lambda \ll 1$
and $\kappa_{\infty,1}(M(\lambda)) = \lambda^d\cdot\|\alpha\|_1\cdot\|K\|_1$ for $\lambda \gg 1$.  
Figure~\ref{fig:QuadSystem} presents an
example plot of $\kappa_{\infty,1}(M(\lambda))$.
In particular, we have that
the minimum is achieved when 
$$\|J\|_\infty\cdot \|\beta\|_1/\lambda^d = \lambda^d\cdot\|\alpha\|_1\cdot\|K\|_1$$
which occurs when 
$\lambda = \sqrt[2d]{\frac{\|J\|_\infty\cdot\|\beta\|_1}{\|K\|_1\cdot\|\alpha\|_1}}$.  In particular, the minimum 
of $\kappa_{\infty,1}$ is 
$$\max\{\sqrt{\|J\|_\infty\cdot\|K\|_1\cdot\|\alpha\|_1\cdot\|\beta\|_1},~\|J\|_\infty\cdot \|K\|_1,~\|\alpha\|_1\cdot\|\beta\|_1\}.$$
\end{proof}

\begin{example}\label{ex:QuadSystem}
To illustrate Theorem~\ref{thm:Minimize}, consider the polynomial system
$$f(x) = \left[\begin{array}{c}
x_0 x_2 - x_1^2 \\
x_0^2 + x_1^2 + x_2^2 - x_3^2 \\
x_1 x_2 + x_1 x_3 - x_0^2
\end{array}\right]$$
using the coordinate-wise patch $x_3 = 1$ which is
defined by
$\alpha = (0,0,0,1)$ with solution
$$z^* = ((\sqrt{5}+1)/4,~1/2,~(\sqrt{5}-1)/4,~1)\approx
(0.8090,~0.5,~0.3090,~1).$$  
Following the notation of Thm.~\ref{thm:Minimize},
we have
$$d = 2,~~\|J\|_\infty = 5.2361,~~\|K\|_1 = 1.2361,~~\|\alpha\|_1 = 1,\hbox{~~and~~}\|\beta\|_1 = 2.6180$$
so that
$$\lambda = \sqrt[4]{\frac{\|J\|_\infty\cdot\|\beta\|_1}{\|K\|_1\cdot\|\alpha\|_1}} \approx 1.8249$$
is the scaling factor to minimize $\kappa_{\infty,1}$ as
shown in Figure~\ref{fig:QuadSystem}.  Hence, we take the affine
patch defined by $v\cdot x = 1$ where $v = \lambda\cdot\alpha \approx 
(0,0,0,1.8249)$ yielding the corresponding point 
$$x^* = z^*/\lambda \approx (0.4433,~0.2740,~0.1693,~0.5480).$$

\begin{figure}[!t]
\centering
  \includegraphics[scale = 0.45]{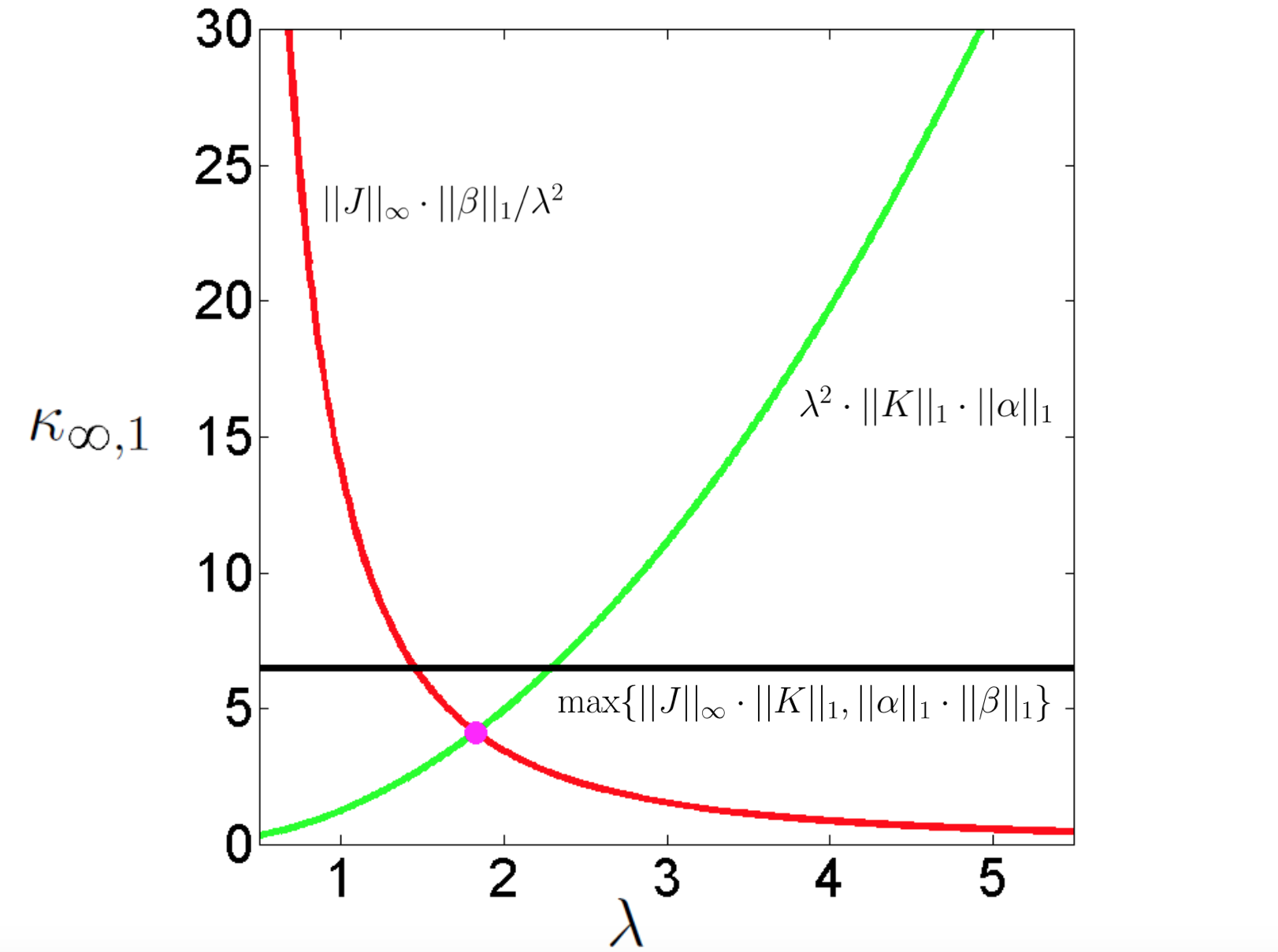}
\caption{Plot of $\kappa_{\infty,1}$ a function of the scaling parameter $\lambda$.}
\label{fig:QuadSystem}
\end{figure}
\end{example}

When using an orthogonal patch, the following shows that we can simplify the computation~of~$\lambda$.

\begin{corollary}\label{cor:Minimize}
Using the same setup as Theorem~\ref{thm:Minimize},
if $\alpha = z^*$ with $\alpha^H\alpha = \|\alpha\|_2^2 = 1$, then
$\beta = \alpha = z^*$ so that~$\kappa_{\infty,1}(M(\lambda))$
is minimized when 
$$\lambda = \sqrt[2d]{\frac{\|J\|_\infty}{\|K\|_1}}.$$
\end{corollary}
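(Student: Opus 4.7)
The plan is to reduce the corollary to Theorem~\ref{thm:Minimize} by showing that the hypothesis $\alpha = z^*$ with $\|\alpha\|_2 = 1$ forces $\beta = \alpha$. Once that identity is in hand, the claim follows immediately by substituting $\|\beta\|_1 = \|\alpha\|_1$ into the formula of Theorem~\ref{thm:Minimize} and cancelling.

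To show $\beta = \alpha$, I would expand the block identity $M(1)\cdot M(1)^{-1} = I_{N+1}$ using the stated partitions. Writing it out gives the four relations
\begin{equation*}
Jf(z^*)\,K = I_N,\qquad Jf(z^*)\,\beta = 0,\qquad \alpha^H K = 0,\qquad \alpha^H\beta = 1.
\end{equation*}
The second relation says $\beta \in \ker Jf(z^*)$. This is where the homogeneity of $f$ enters: since each $f_i$ is homogeneous of degree $d$, Euler's identity yields $Jf(z^*)\cdot z^* = d\cdot f(z^*) = 0$, so $z^* \in \ker Jf(z^*)$ as well. At a nonsingular solution $[z^*]\in\bP^N$, the Jacobian $Jf(z^*)$ has full rank $N$, hence its kernel is one-dimensional, so $\beta = c\cdot z^*$ for some scalar $c\in\bC$. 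Using the fourth relation together with $\alpha = z^*$ and $\alpha^H\alpha = \|z^*\|_2^2 = 1$, I get $1 = \alpha^H\beta = c\,\alpha^H\alpha = c$, so $\beta = z^* = \alpha$.

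With $\beta = \alpha$, Theorem~\ref{thm:Minimize} directly gives
\begin{equation*}
\lambda = \sqrt[2d]{\frac{\|J\|_\infty\cdot\|\beta\|_1}{\|K\|_1\cdot\|\alpha\|_1}} = \sqrt[2d]{\frac{\|J\|_\infty}{\|K\|_1}},
\end{equation*}
which is the desired formula. The only nontrivial step is identifying $\beta$, and the main obstacle — if any — is simply recognizing that Euler's identity is what pins $\beta$ down to a scalar multiple of $z^*$; once that is observed, the rest is bookkeeping.
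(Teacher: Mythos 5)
Your proof is correct and follows essentially the same route as the paper: Euler's identity shows $z^*$ lies in the kernel of $Jf(z^*)$, and the block relations from $M(1)M(1)^{-1}=I$ (equivalently, the uniqueness of the vector with $J\beta=0$, $\alpha^H\beta=1$) force $\beta=\alpha$, after which the formula is immediate from Theorem~\ref{thm:Minimize}. Your phrasing via the one-dimensional kernel is just a slightly more explicit version of the paper's uniqueness argument.
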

\begin{proof}
Following the notation from 
\eqref{eq:M} and Thm.~\ref{thm:Minimize},
$M(1) = \left[\begin{array}{c} J \\ \alpha^H \end{array}\right]$ and $M(1)^{-1} = \left[\begin{array}{cc} K & \beta \end{array}\right]$.  Euler's Theorem yields that 
$J \alpha = 0$ since each $f_i$ is homogeneous and $f(z^*) = 0$.
Since $\beta$ is the unique vector such that
$J\beta = 0$ and $\alpha^H \beta = 1$,
we have $\alpha = \beta$ and the result follows from Thm.~\ref{thm:Minimize}.
\end{proof}

\begin{example}\label{ex:QuadSystem2}
Reconsider $f$ from Ex.~\ref{ex:QuadSystem} with the
orthogonal patch $\alpha \cdot x = 1$ where $\alpha = z^*$ and
$$z^* = ((\sqrt{5}+1)/\sqrt{32},~1/\sqrt{8},~(\sqrt{5}-1)/\sqrt{32},~1/\sqrt{2})\approx
(0.5721,~0.3536,~0.2185,~0.7071).$$
By Cor.~\ref{cor:Minimize}, $\alpha = \beta$ and we have
$$d = 2,~~\|J\|_\infty = 3.7025,\hbox{~~and~~}\|K\|_1 = 1.7481$$
so that
$$\lambda = \sqrt[4]{\frac{\|J\|_\infty}{\|K\|_1}} \approx 1.2064$$
is the scaling factor to minimize $\kappa_{\infty,1}$.
Hence, we take the affine patch defined by 
$v\cdot x = 1$ where $v = \lambda\cdot\alpha \approx (0.6901,~0.4265,~0.2636,~0.8530)$ yielding the corresponding point
$$x^* = z^*/\lambda \approx (0.4742,~0.2931,~0.1811,~0.5861).$$
Figure~\ref{fig:QuadSystem2} compares $\kappa_{\infty,1}(M(\lambda))$ with the condition number 
$\kappa_2(M(\lambda))$.

\begin{figure}[!t]
\centering
  \includegraphics[scale = 0.45]{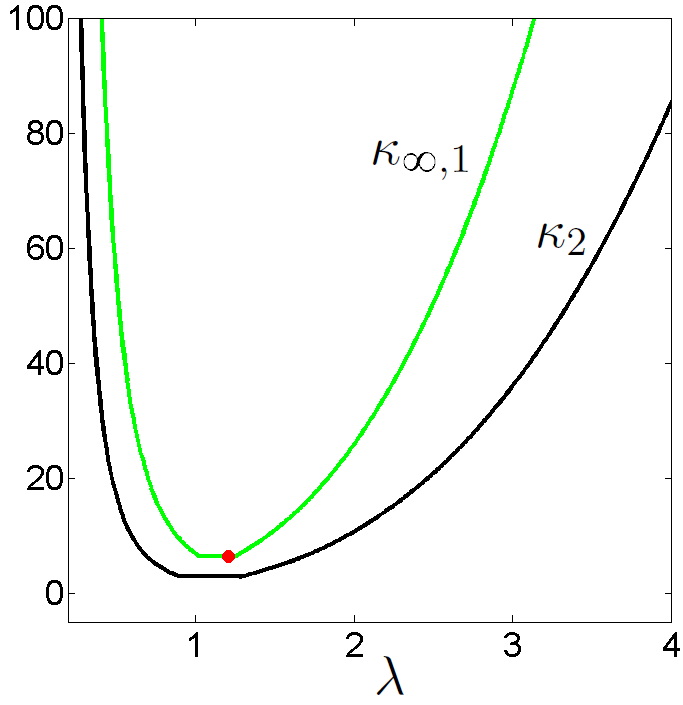}
\caption{Plot of $\kappa_{\infty,1}$ and $\kappa_2$
as a function of the scaling parameter $\lambda$.}
\label{fig:QuadSystem2}
\end{figure}
\end{example}

\section{Randomization}\label{Sec:Randomization}

The polynomial system $f(x)$ in Ex.~\ref{ex:TwistedCubic}
is overdetermined since it consists of $4$ polynomials
defined on~$\bP^3$.  If one considers appending
the patch equation $x_0 - 1 = 0$ to $f(x)$, the system
is still overdetermined with $5$ polynomials defined 
on $\bC^4$.  The first $3$ polynomials
define the twisted cubic curve, which has codimension $2$,
while the fourth polynomial is a hyperplane
that intersects the twisted cubic curve transversely.
Since a general perturbation applied to the first 
$3$ polynomials results in a system with no solutions,
numerically solving inexact 
overdetermined systems is unstable, 
e.g., see~\cite[\S~9.2]{BHSW:BertiniBook}.  
One could recover stability by using 
Gauss-Newton least-squares approaches, e.g., see \cite{Deuflhard}. 
Another approach for stabilization 
is to replace overdetermined systems
with well-constrained subsystems.  The following
is a version of 
Bertini's Theorem, e.g., see \cite[Thm.~A.8.7]{SW:Book}
and \cite[Thm.~9.3]{BHSW:BertiniBook}, which
permits such replacements using well-constrained subsystems.

\begin{theorem}[Bertini's Theorem]\label{thm:Bertini}
If $f(x)$ is a system of $n$ polynomials 
on $\bC^N$ and $1\leq k\leq n$, then there exists 
a Zariski open dense set $U \subset \bC^{k \times n}$
such that for every $A\in U$, each generically nonsingular
irreducible component of the solution set of 
$f = 0$ of codimension at most $k$
is a generically nonsingular
irreducible component of the solution set of $A\cdot f = 0$.
\end{theorem}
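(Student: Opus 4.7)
My plan is to prove the theorem by reducing the statement to a rank condition on the Jacobian and showing the desired set of matrices $A$ is the complement of a proper algebraic subset of $\bC^{k\times n}$.

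First I would fix the finite list $V_1,\dots,V_r$ of generically nonsingular irreducible components of $\{f=0\}$ with $c_i := \text{codim}(V_i) \leq k$, and for each $i$ pick a smooth point $x_i\in V_i$ at which $\text{rank}\,Jf(x_i)=c_i$ (such a point exists by the hypothesis that $V_i$ is generically nonsingular). The inclusion $V_i \subseteq \{A f=0\}$ is immediate from $f|_{V_i}\equiv 0$, so the content is to show that for a Zariski open dense set of $A$, each $V_i$ appears as a generically nonsingular irreducible component of $\{Af=0\}$ of the same codimension $c_i$.

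The key step is a linear algebra observation. Since $Jf(x_i)\in\bC^{n\times N}$ has rank $c_i$, the product $A\cdot Jf(x_i)\in\bC^{k\times N}$ always has rank at most $c_i$; the condition $\text{rank}(A\cdot Jf(x_i))=c_i$ (which equals $\min(k,c_i)$ because $c_i\leq k$) is the non-vanishing of some $c_i\times c_i$ minor of $A\cdot Jf(x_i)$ regarded as a polynomial in the entries of $A$, and it is satisfied whenever $A$ restricts to a surjection onto the row space of $Jf(x_i)$, which certainly happens for some $A$. Hence this condition defines a nonempty Zariski open subset $U_i\subset\bC^{k\times n}$. Set $U := \bigcap_{i=1}^r U_i$, a Zariski open dense subset.

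Finally I would combine these ingredients. For $A\in U$, the Jacobian of $A f$ at $x_i$ has rank $c_i$, so the implicit function theorem gives that $\{Af=0\}$ is smooth of codimension $c_i$ in a neighborhood of $x_i$. Because this local smooth piece contains $V_i$, which is itself irreducible of codimension $c_i$, the two must coincide locally near $x_i$; thus $V_i$ is an irreducible component of $\{Af=0\}$, and the full-rank Jacobian at $x_i$ certifies that it is generically nonsingular. The main obstacle I anticipate is being careful that $V_i$ does not get absorbed into a larger component of $\{Af=0\}$, but this is precisely what the codimension equality forced by the rank computation rules out, so the argument closes cleanly.
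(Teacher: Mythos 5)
Your argument is essentially correct, but note that the paper does not prove this statement at all: it quotes it as a known version of Bertini's Theorem and cites Sommese--Wampler (Thm.~A.8.7) and Bates et al.\ (Thm.~9.3) for the proof, so there is no in-paper argument to match. Your blind proof is a legitimate self-contained route for the generically nonsingular case: reduce to finitely many components $V_i$, use a witness point $x_i$ with $\rank Jf(x_i)=c_i$, observe $J(Af)=A\cdot Jf$, and impose the determinantal (hence Zariski open, nonempty, dense) condition $\rank\bigl(A\,Jf(x_i)\bigr)=c_i$; intersecting over $i$ gives $U$. Two small imprecisions are worth fixing. First, the condition is that $A$ is \emph{injective} on the column space (image) of $Jf(x_i)$, not that it ``restricts to a surjection onto the row space''; injectivity is achievable since $c_i\leq k$, which is all you need for nonemptiness of $U_i$. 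Second, when $c_i<k$ the implicit function theorem applied to $Af$ does not literally say $\{Af=0\}$ is smooth of codimension $c_i$ near $x_i$; rather, choosing $c_i$ of the equations of $Af$ with independent gradients at $x_i$ gives a smooth local manifold $W$ of codimension $c_i$ with $V_i\subseteq\{Af=0\}\subseteq W$ near $x_i$, and the equality of dimensions plus local irreducibility of $W$ forces all three to coincide locally --- which is the sandwich you gesture at in your final paragraph, and which also rules out absorption into a larger component since an irreducible component has the same local dimension at every point. Generic nonsingularity then follows because the rank condition $\rank\bigl(A\,Jf(x)\bigr)=c_i$ is Zariski open on $V_i$ and holds at $x_i$, hence on a dense open subset of $V_i$. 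Compared with the cited references, which prove more general randomization statements (handling positive-dimensional and non-reduced behavior with heavier algebraic-geometry machinery), your pointwise Jacobian argument is more elementary and exactly tailored to the generically nonsingular components the paper actually uses for path tracking.
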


We state Bertini's Theorem to focus on generically nonsingular
irreducible components for path tracking purposes (see Section~\ref{Sec:Algorithm}).  One can always reduce to this case
using deflation, e.g.,~\cite{Deflation1,IsosingularDeflation}.
We note that randomization could also add extra solutions
which do not solve the original system.  
Such extraneous solutions 
can be certifiably identified~\cite{alphaCertified}.

\begin{example}\label{eq:TwistedCubicRandomization}
For $f(x)$ as in \eqref{eq:TwistedCubic} together with the patch equation $x_0 - 1 = 0$, consider the system
\begin{equation}\label{eq:TwistedCubicRand}
g(x) = \left[\begin{array}{ccccc} 
2   & -1  &  -3   &  2   &  2 \\
-2  &  -1 &    0  &   3  &  -4 \\
5   &  3  &  -1   & -2   & -4 \\
-5  &   3  &   2  &   2  &   0\end{array}\right]\cdot \left[\begin{array}{c} f(x) \\ x_0 - 1 \end{array}\right] = 0
\end{equation}
which is a well-constrained system consisting 
of $4$ polynomial equations on $\bC^4$.
Bertini's Theorem yields that the three points corresponding
to \eqref{eq:ThreePoints} are isolated nonsingular
solutions to $g = 0$.  
Randomization has also added two additional solutions to $g = 0$,
approximately
$$(0.7955 \pm 0.0744i,~0.3755 \mp 0.6315i,
~-1.2239 \mp 0.1598i,~-0.6730 + 0.9810i)$$
where $i = \sqrt{-1}$ which are easily identified since $x_0 \neq 1$.
\end{example}

In \eqref{eq:TwistedCubicRand}, we selected the randomizing matrix $A$ to have small integer entries for presentation purposes.  
In practice, the matrix $A$ is selected to have random complex entries.

Analogous to the patching strategies described 
in Section~\ref{Sec:Patch},
we describe three randomization strategies.  
The first is based directly on Theorem~\ref{thm:Bertini}
which utilizes a fixed randomization matrix, 
a commonly used technique in numerical algebraic
geometry computations, e.g., \cite[\S~9.2]{BHSW:BertiniBook}.
The second utilizes a locally adapted orthogonalization strategy based on the Moore-Penrose pseudoinverse.  
To create sparse randomizations, the third
utilizes a locally adapted leverage score strategy.

\subsection{Fixed randomization}\label{Sec:FixedRandomization}

As described in Bertini's Theorem (Theorem~\ref{thm:Bertini}),
one can utilize a fixed general randomization matrix
$A\in\bC^{k\times n}$ to yield a well-constrained system.  
As detailed in \cite[\S~13.5]{SW:Book},
one could take $A$ to have the form $A = [I~Q]$
where $I$ is the $k\times k$ identity matrix and
$Q\in\bC^{k\times (n-k)}$ is general to reduce
the number of computations needed to randomize the system.

Similar to the fixed affine coordinate patch in Section~\ref{Sec:GlobalRandomPatch}, the disadvantage
of using a fixed randomization is that ill-conditioning
can artificially be introduced as demonstrated in the following.

\begin{example}\label{ex:TwistedCubic6}
With $f(x)$ as in \eqref{eq:TwistedCubic}, consider
the system $h:\bC^4\rightarrow\bC^5$ where
\begin{equation}\label{eq:TwistedCubicx0}
h(x) = \left[\begin{array}{c} f(x) \\
x_0 - 1 \end{array}\right]
\end{equation}
and isolated nonsingular solution $(1,-1,1,-1)$.  
Table~\ref{tab:CN2} shows the condition number
with respect to the $2$-norm of the Jacobian matrix
for various randomizations of the form $[I~Q]\cdot h$
where $I$ is the $4\times 4$ identity matrix
and $Q\in\bC^{4\times 1}$ at this isolated nonsingular
solution.  

\begin{table}[!ht]
\centering
\begin{tabular}{c|c|c|c}
$Q$ & $[1,1,1,1]^T$ & $[-0.0109,0.5208,0.4013,0.7534]^T$ & 
$[-0.0889,0.6266,0.7152,0.2966]^T$ \\
\hline
CN & 33.3 & 185.6 & 67,193.2 
\end{tabular}
\caption{Condition number of the Jacobian matrix of 
$[I~Q]\cdot h$ for different choices of $Q$}
\label{tab:CN2}
\end{table}
\end{example}

One can attempt to limit this artificial ill-conditioning
by using a randomization which is locally
adapted to the current solution under consideration.
The next two subsections 
consider two methods for selecting local randomizations.

\subsection{Pseudoinverse randomization}\label{Sec:PseudoinverseRandomization}

At a nonsingular solution, 
the Jacobian matrix has full rank
so that one can use 
the Moore-Penrose pseudoinverse to construct a 
randomization matrix.
To that end, assume that $f:\bC^N\rightarrow\bC^n$ is a polynomial system and $x^*\in\bC^N$ is an isolated nonsingular solution of $f = 0$, i.e., $f(x^*) = 0$ and $\rank Jf(x^*) = N \leq n$
where $Jf(x^*)$ is the Jacobian matrix of $f$ evaluated
at $x^*$.  Via the singular value decomposition, we can
find unitary matrices 
$U\in\bC^{n\times N}$ and $V\in\bC^{N\times N}$
and invertible diagonal matrix $\Sigma\in\bR^{N\times N}$ 
such that
$$Jf(x^*) = U\cdot \Sigma \cdot V^H \in \bC^{n\times N}$$
where $V^H$ is the Hermitian (conjugate) transpose 
of $V$.  The Moore-Penrose pseudoinverse of $Jf(x^*)$ is $Jf(x^*)^\dagger = V\cdot \Sigma^{-1}\cdot U^H\in\bC^{N\times n}$ 
so that
$$Jf(x^*)^\dagger\cdot Jf(x^*) = I$$
where $I$ is the $N\times N$ identity matrix.
Therefore, the randomized well-constrained subsystem
$$g(x) = Jf(x^*)^\dagger\cdot f(x)$$
has a nonsingular solution at $x^*$ with $Jg(x^*) = I$.

\begin{example}\label{ex:TwistedCubic7}
With $h:\bC^4\rightarrow\bC^5$ as in \eqref{eq:TwistedCubicx0}
and $x^* = (1,-1,1,-1)$, we have
$$Jh(x^*)^\dagger = \left[\begin{array}{ccccc}
0 & 0 & 0 & 0 & 1 \\
1/6 & 1/3 & 1/6 & 1/2 & -1 \\
1/3 & -1/3 & -2/3 & -1 & 1 \\
-1/2 & 0 & 1/2 & 3/2 & -1 
\end{array}\right].$$
Thus, for the randomized well-constrained subsystem
$g(x) = Jh(x^*)^\dagger\cdot h(x)$,
$g(x^*) = 0$ and 
$Jg(x^*) = I \in \bC^{4\times 4}$.
\end{example}

When path tracking, one uses the pseudoinverse
based on the current point on the path 
and performs a predictor-corrector step with that randomized
system.  If the step is successful, the randomization 
is updated based on the new point on the path (see Section~\ref{Sec:Algorithm}).

The advantage of using the pseudoinverse randomization 
is that the Jacobian matrix at the current point
of the randomized system is the identity matrix, a perfectly
conditioned matrix.  The disadvantage 
is the extra computations:
both in the computation of the pseudoinverse 
and to utilize the randomization which is typically dense.
The following uses a sparse randomization.

\subsection{Leverage score randomization}\label{Sec:LeverageRandomization}

The randomizations in Sections~\ref{Sec:FixedRandomization}
and~\ref{Sec:PseudoinverseRandomization} construct
a new system which typically 
depends upon all of the polynomials in the 
original system.  We aim to design an approach
to select a well-constrained subset 
which has a nonsingular solution at the current point.
For example, if the given system is vastly 
overdetermined, we aim to select a subset of polynomials
from the system rather than having to randomize together all
of the polynomials.  
The method that we propose is based on {\em leverage scores}
which were originally used to find outliers in
data when computing regression analysis \cite{Leverage}.  
We follow the approach in \cite{Ipsen} which states
that leverage scores can be used to describe important data
in a matrix.  In our case, we aim to locate
polynomials in the system 
whose gradients are important
rows of the Jacobian matrix evaluated at the given point.

\begin{mydef}[Leverage scores]\label{Def:LeverageScores}
For a matrix $M\in\bC^{m\times n}$ of rank $n \leq m$,
let $Q\in\bC^{m\times n}$ be any unitary matrix 
whose columns form a basis for the column span of $M$.
Then, the {\em leverage scores} $\ell_1,\dots,\ell_m\in\bR_{\geq0}$ for $M$ are defined by
$$
\ell_j = \|Q_{j}\|_2^2
$$
where $Q_{j}$ is the $j^{\rm th}$ row of $Q$.
\end{mydef}

The definition of leverage scores is basis independent
\cite[\S~5.1]{Ipsen} so that that leverage scores
are well-defined.  Moreover, since $Q$ is unitary,
each $\ell_j\in[0,1]$ with $\sum_{j=1}^m \ell_j = n$.

As above, assume that $f:\bC^N\rightarrow\bC^n$ is a polynomial system and $x^*\in\bC^N$ is an isolated nonsingular solution of $f = 0$.  Rather than perform a singular value decomposition
on $Jf(x^*)$ as in Section~\ref{Sec:PseudoinverseRandomization},
we perform a (column pivoted) QR factorization of~$Jf(x^*)$,
that is, we compute
\begin{equation}\label{eq:QR}
Jf(x^*) = Q\cdot R\cdot P
\end{equation}
where $Q\in\bC^{n\times N}$ is unitary,
$R\in\bC^{N\times N}$ is upper triangular
and $P\in\bR^{N\times N}$ is a permutation matrix.
The permutation matrix swaps the columns which 
corresponds with simply reordering the variables.  
With this setup, we construct
the randomization matrix iteratively 
based on the largest 
values of the leverage scores of $Jf(x^*)$ as follows.

\begin{mydef}[Leverage score randomization matrix]\label{Def:LeverageScoreRandomization}
Following the setup as above, suppose
that \mbox{$\ell_1,\dots,\ell_n$}~are~the leverage scores of $Jf(x^*)$.  
Construct a reordering of the leverage scores 
so that they are in decreasing order, say 
$\ell_{k_1} \geq \ell_{k_2} \geq \cdots \geq \ell_{k_n} \geq 0$.
Fix $j_1 = 1$.  For $1 \leq r < N$, given $j_1 < \cdots < j_r$,
select $j_{r+1} > j_t$ to be the minimum value such 
that the $k_{j_1}, \dots, k_{j_{r+1}}$ rows of~$Jf(x^*)$
are linearly independent.  The corresponding 
{\em leverage score randomization matrix}
is $A\in\bR^{N\times n}$ which has the 
following~$N$ nonzero entries:
$$A_{r,k_{j_r}} = \|Jf(x^*)_{k_{j_r}}\|_2^{-1} \hbox{~~for~~} r = 1,\dots,N$$
where $Jf(x^*)_{p}$ is the $p^{\rm th}$ row
of $Jf(x^*)$.
\end{mydef}

\begin{example}
To illustrate, consider $x^* = (1,1)$ and 
$$f(x) = \left[\begin{array}{c} x_1 - 1 \\ x_1 - 1 \\ x_2 - 1 \\ x_2 - 1\end{array}\right] \hbox{~~so that~~} Jf(x^*) = \left[\begin{array}{cc} 1 & 0 \\ 1 & 0 \\ 0 & 1 \\ 0 & 1 \end{array}\right]
= \left[\begin{array}{cc} 1/\sqrt{2} & 0 \\
1/\sqrt{2} & 0 \\ 0 & 1/\sqrt{2} \\ 0 & 1/\sqrt{2} \end{array}\right]
\left[\begin{array}{cc} \sqrt{2} & 0 \\ 0 & \sqrt{2}\end{array}\right].
$$
The leverage scores of $Jf(x^*)$ are all equal,
namely $\ell_1 = \cdots = \ell_4 = 1/2$.
With the trivial ordering $k_j = j$, Definition~\ref{Def:LeverageScoreRandomization}
produces $j_1 = 1$ and $j_2 = 3$ since the first and
second rows of $Jf(x^*)$ are not linearly independent.  
Since each row of $Jf(x^*)$ has norm $1$, 
the leverage score randomized matrix~is 
$$A = \left[\begin{array}{cccc} 
1 & 0 & 0 & 0 \\
0 & 0 & 1 & 0\end{array}\right]
\hbox{~~~with~~~}
A\cdot f(x) = 
\left[\begin{array}{c} x_1 - 1 \\ x_2 - 1\end{array}\right].$$
\end{example}

\begin{proposition}
With the setup described above, if $A$ 
is a leverage score randomization matrix
for $Jf(x^*)$, then $A\cdot Jf(x^*)$
has rank $N$ such that each row has
unit length in the $2$-norm.
\end{proposition}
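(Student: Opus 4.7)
The statement has two assertions: first that $A \cdot Jf(x^*)$ actually makes sense as described (in particular, that the inductive selection of the indices $j_1 < \cdots < j_N$ in Definition~\ref{Def:LeverageScoreRandomization} always terminates with exactly $N$ chosen rows), and second that the resulting product has rank $N$ with unit-norm rows. My plan is to separate these concerns and handle the termination question first, since the norm and rank claims fall out almost by inspection once the construction is known to be well-defined.

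I would begin by arguing that the greedy selection succeeds. Suppose we have already chosen $j_1 < \cdots < j_r$ with $r < N$ so that the rows $Jf(x^*)_{k_{j_1}}, \dots, Jf(x^*)_{k_{j_r}}$ are linearly independent. Their span $S_r \subset \bC^N$ then has dimension $r < N$, whereas the full row space of $Jf(x^*)$ has dimension $N$ because $x^*$ is an isolated nonsingular solution. Consequently not every row of $Jf(x^*)$ lies in $S_r$; since $k_1,\dots,k_n$ is a permutation of $1,\dots,n$, the complementary indices $\{k_{j_r+1},\dots,k_{j_n}\}$ together with the already-selected rows exhaust all rows of $Jf(x^*)$, so at least one row with index among $k_{j_r+1},\dots,k_{j_n}$ must lie outside $S_r$. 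The smallest such index supplies a valid $j_{r+1} > j_r$, so the construction proceeds until $r = N$.

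Next I would read off the two properties from the explicit form of $A$. By construction $A$ has a single nonzero entry $\|Jf(x^*)_{k_{j_r}}\|_2^{-1}$ in row $r$ and column $k_{j_r}$ (the denominator is nonzero because that row was chosen to be linearly independent of prior selections, so in particular is not the zero row). Therefore the $r$-th row of $A\cdot Jf(x^*)$ is exactly
$$\|Jf(x^*)_{k_{j_r}}\|_2^{-1}\cdot Jf(x^*)_{k_{j_r}},$$
a unit vector in the $2$-norm. For the rank claim, $A\cdot Jf(x^*)$ is an $N\times N$ matrix whose rows are nonzero scalar multiples of $Jf(x^*)_{k_{j_1}},\dots,Jf(x^*)_{k_{j_N}}$, and these $N$ rows are linearly independent by the very criterion enforced in the inductive selection; scaling preserves linear independence, so the product has rank $N$.

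The only nontrivial step is the termination argument for the greedy construction, and I expect this to be the main (though modest) obstacle: one must verify that restricting to indices $j_{r+1} > j_r$ does not prematurely cut off the selection. The rank-based counting argument above, combined with the observation that the unchosen indices under the leverage-score ordering still cover a co-$r$-dimensional piece of the row space, resolves this cleanly.
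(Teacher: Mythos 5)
Your proposal is correct in substance and follows essentially the same route as the paper: both rest on the fact that the row space of $Jf(x^*)$ has dimension $N$, and both reduce the rank claim to the classical statement that a greedy selection of rows from a rank-$N$ matrix produces $N$ linearly independent rows --- the paper simply cites this as classical, whereas you write out the exchange-type counting argument. For the unit-norm claim you are more direct than the paper: you observe that each selected row is nonzero because it belongs to a linearly independent set, while the paper deduces nonzeroness through the QR factorization of $Jf(x^*)$ and the equivalence $\ell_j = 0 \Leftrightarrow Jf(x^*)_j = 0$; your shortcut is valid and arguably cleaner, since the leverage scores play no essential role in the proposition once the indices $j_1,\dots,j_N$ have been fixed.

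One step of your termination argument is stated imprecisely. You claim that the already-selected rows together with the positions after $j_r$ in the ordering ``exhaust all rows,'' but this omits the positions that were skipped during earlier stages of the greedy selection, so the inference ``not every row lies in $S_r$, hence some row at a position beyond $j_r$ lies outside $S_r$'' is not yet justified as written. The repair is one sentence: every skipped row lies in $S_r$, because it was skipped precisely for being linearly dependent on the rows selected before it, and those rows are among $Jf(x^*)_{k_{j_1}},\dots,Jf(x^*)_{k_{j_r}}$. Hence if all rows at positions beyond $j_r$ also lay in $S_r$, the entire row space would have dimension at most $r < N$, contradicting nonsingularity of $x^*$; with that observation added, your argument is complete.
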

\begin{proof}
Since $Jf(x^*)$ has rank $N$, 
matrix $Q$ in \eqref{eq:QR}
has rank $N$.  In particular, at least
$N$ rows of both $Jf(x^*)$ and~$Q$ must be nonzero
with $\ell_{k_1} > 0$.
Moreover, since $R\cdot P$ is invertible, 
$Jf(x^*)_j = 0$ if and only if 
$Q_j = 0$, i.e., $\ell_j = 0$.  Thus, 
if $j_1,\dots,j_N$ are selected as in Defn.~\ref{Def:LeverageScoreRandomization},
then $\|Jf(x^*)_{k_{j_r}}\|_2 > 0$ for $r = 1,\dots,N$.
Hence, it follows that 
the rows of $A\cdot Jf(x^*)$ have
unit length in the the $2$-norm.

The fact that the resulting matrix
$A\cdot Jf(x^*)$ has rank $N$ using such a greedy selection 
of rows is classical in linear algebra and follows
from the dimension of the row span of $Jf(x^*)$ being $N$.
\end{proof}

\begin{example}\label{ex:TwistedCubic8}
With $h(x)$ as in \eqref{eq:TwistedCubicx0}
and $x^* = (1,-1,1,-1)$, the leverage scores of $Jh(x^*)$ are
$\ell_1 = \ell_2 = \ell_3 = 2/3$ and $\ell_4 = \ell_5 = 1$.
Regarding leverage scores as a measure of importance,
this shows that the fourth and fifth polynomials
in $h$, namely $x_0+x_1+x_2+x_3$ and $x_0-1$, respectively,
are equally the two most important at $x^*$.
The other three polynomials which define the twisted
cubic are equally important to each other, 
but less than the two linear polynomials.  
By taking the reordering
$k_1 = 4$, $k_2 = 5$, $k_3 = 1$, $k_4 = 2$, and $k_5 = 3$, we
have the randomized system
{\small
$$g(x) = \left[\begin{array}{ccccc}
0 & 0 & 0 & 1/2 & 0 \\
0 & 0 & 0 & 0 & 1 \\
1/\sqrt{6} & 0 & 0 & 0 & 0 \\
0 & 1/2 & 0 & 0 & 0 
\end{array}\right]\cdot h(x) = \left[\begin{array}{c}
(x_0 + x_1 + x_2 + x_3)/2 \\
x_0 - 1 \\
(x_0 x_2 - x_1^2)/\sqrt{6} \\
(x_1 x_2 - x_0 x_3)/2
\end{array}\right]$$
}so that each row of
{\small
$$Jg(x^*) = \left[\begin{array}{cccc}
1/2 & 1/2 & 1/2 & 1/2 \\
1 & 0 & 0 & 0\\
1/\sqrt{6} & 2/\sqrt{6} & 1/\sqrt{6} & 0\\
1/2 & 1/2 & -1/2 & -1/2 
\end{array}\right]$$
}has unit length in the $2$-norm
and its condition number with respect to the $2$-norm
is $8.8$.
\end{example}

Similar to the pseudoinverse randomization
when path tracking, one uses the leverage score randomization
based on the current point on the path 
and performs a predictor-corrector step with that randomized
system.  If the step is successful, the randomization 
is updated based on the new point on the path (see Section~\ref{Sec:Algorithm}).

The advantage of using a leverage score randomization 
is that the randomizing matrix 
is sparse and selects a well-constrained subset of the 
original polynomials.
Thus, one saves computational time by 
only evaluating the polynomials and their gradients
of the polynomials which are selected.

\section{Path tracking algorithms}\label{Sec:Algorithm}

We now aim to incorporate the patching and randomization
strategies into path tracking.  Suppose
that $F(x;p)$ is a parameterized system which
is polynomial in the variables $x\in X$ and analytic
in the parameters $p\in\bC^P$.  
Depending on the structure of $F$, we may regard
$X$ as a projective or affine space,
or, more generally, as a product of such spaces.  Suppose
that $p^*\in\bC^P$ is generic and~$S(p^*)$ 
consists of the isolated nonsingular solutions of $F(x;p^*) = 0$.
As mentioned in Section~\ref{Sec:ParameterHomotopies},
there is a generic behavior of $F$ 
with respect to the parameter space $\bC^P$
so that, for given~$\hat{p}\in\bC^P$, 
we can use the parameter 
homotopy $H$ defined in 
\eqref{eq:ParameterHomotopy} with start points $S(p^*)$
at $t = 1$ to compute $S(\hat{p})$, the 
set of all nonsingular isolated solutions of $F(x;\hat{p}) = 0$.
In particular, for each $x^*\in S(p^*)$, there is a smooth
homotopy path $x(t)$ for $t\in(0,1]$ such that
$x(1) = x^*$ and $H(x(t),t) \equiv 0$.
Since~$x(t)$ could be defined on products of 
projective and affine spaces, and~$F$ could be overdetermined, 
we utilize patching and randomization strategies
to track $x(t)$.  To avoid having to deal with
paths with divergent and singular endpoints, which
can be handled using endgames \cite[Chap.~10]{SW:Book},
we assume that $x(t)$ exists and is smooth on $[0,1]$
in Algorithm~\ref{Alg:PathTrack}.
By working intrinsically on the affine patch, one could 
attempt to reduce the linear algebra cost of performing a predictor-corrector~step.  

The justification for Algorithm~\ref{Alg:PathTrack} follows from
Bertini's Theorem (Theorem~\ref{thm:Bertini})
and the use of affine coordinate patches 
which permits computations regarding 
the path $x(t)$ to be performed using a well-constrained subsystem
on an affine space.  By having local control on the 
condition number, we aim to perform fewer operations
when path tracking
as exemplified~in~Section~\ref{Sec:Vision}.

\begin{algorithm}
\caption{Path Tracker}
\begin{algorithmic}
\STATE \textbf{Input:} A parameterized system $F(x;p)$ that is polynomial in $x\in X$ and analytic in $p\in\bC^P$ where $X$ is a product of projective and affine spaces; a generic parameter value $p^*\in\bC^P$ and a target parameter value $\hat{p}\in\bC^P$;
an isolated nonsingular singular $x^*$ of $F(x;p^*)=0$ such that the solution path $x(t)$ defined by $x(1) = x^*$ and $H(x(t),t)\equiv 0$ where $H$ as~in~\eqref{eq:ParameterHomotopy}
is smooth for~$t\in[0,1]$.
\medskip
\STATE \textbf{Output:} An isolated nonsingular solution in of $F(x;\hat{p})$ = 0.
\medskip
	\STATE{Initialize $z^* = x^*$, $q^* = p^*$, $t = 1$, and select an initial step size $dt > 0$, e.g., $dt = 0.1$.}
	\vspace{2mm}
	\WHILE{$t > 0$}
		\STATE{Apply a patching strategy (Section~\ref{Sec:Patch}), possibly adaptively using the current point $z^*$, 
to the projective spaces in $X$ so that all computations are performed on an affine space.  Update $z^*$ to lie on the selected patches.  Construct $G(x;p)$ which is $F(x;p)$ together with the added patch equations.}
\medskip
		\STATE{Apply a randomization strategy (Section~\ref{Sec:Randomization}), possibly adaptively using the current point~$z^*$ and parameter value $q^*$, to $G(x;p)$ to create a well-constrained subsystem~$A\cdot G(x;p)$ such that $A\cdot J_xG(z^*;q^*)$ is nonsingular, where $J_xG(x;p)$ is the Jacobian matrix of $G$ with respect~to~$x$.}
		\vspace{2mm}
		\STATE{Construct the homotopy $H(x,t) = A\cdot G(x;tp^* + (1-t)\hat{p})$ and perform a predictor-corrector step from $t$ to $\max\{t-dt,0\}$ using the homotopy $H$ with start point $z^*$ at $t$ yielding $y^*$.}
		\vspace{2mm}
		\IF{predictor-corrector step is successful}
			\STATE{Update $z^* = y^*$, $t = \max\{t - dt,0\}$, and $q^* = tp^* + (1-t)\hat{p}$.}
			\vspace{2mm}
			\STATE{Consider increasing the step size if multiple successful steps in a row, e.g., update $dt = 2 \cdot dt$ if $3$ consecutive successful steps.}
			\vspace{2mm}
			\STATE{Consider applying early truncation (see Section~\ref{Sec:Truncation}).}
		\ELSE
			\STATE{Decrease the step size, e.g., set $dt = dt/2$.}
		\ENDIF
	\ENDWHILE
\end{algorithmic}
\label{Alg:PathTrack}
\end{algorithm}

\begin{example}\label{ex:TwistedCubic5}
To illustrate Algorithm~\ref{Alg:PathTrack}, 
we consider the parameterized system
$$F(x;p) = \left[\begin{array}{c}
x_0 x_2 - x_1^2 \\
x_1 x_2 - x_0 x_3 \\
x_1 x_3 - x_2^2 \\
x_2 + p_1x_0 + p_2 x_1 + p_3x_3
\end{array}\right]$$
where $x\in X = \bP^3$ and $p\in\bC^3$.  Thus,
$F(x;p)=0$ defines the intersection of the twisted cubic
with a parameterized family of hyperplanes
which clearly has $3$ nonsingular isolated solutions
generically.  In the following, we consider tracking
the 3 solution paths as $p^* = (1,1,1)$
deforms to \mbox{$\hat{p} = (-1,0.1i,0)$}, where $i = \sqrt{-1}$,
starting with the 3 points in \eqref{eq:ThreePoints}.
This setup ensures that we will need to use 
an affine patch followed by a randomization.

For the fixed
general affine patch, we used the 
randomly selected patch
\begin{equation}\label{eq:FixedPatchExample}
(0.3509 + 0.1476i)x_0 + (0.4524 - 0.4487i)x_1
- (0.4159 + 0.2470i)x_2 + (0.4609 + 0.0523i)x_3 = 1
\end{equation}
where $i = \sqrt{-1}$.

In Figures~\ref{fig:TwistedCubic1},~\ref{fig:TwistedCubic2}, and~\ref{fig:TwistedCubic3}, we compare the condition
number with respect to the $2$-norm 
of the Jacobian matrix along the three paths using
the different patching and randomization strategies. 
For the fixed randomization, we used $A = [I~Q]\in\bC^{4\times5}$
with the randomly selected 
$$Q = [0.1792-0.1432i, -0.7159-0.5784i, 0.1866-0.4692i, 0.4524+0.9864i]^T.$$ 

\begin{figure}[!ht]
\centering
\includegraphics[scale=0.4]{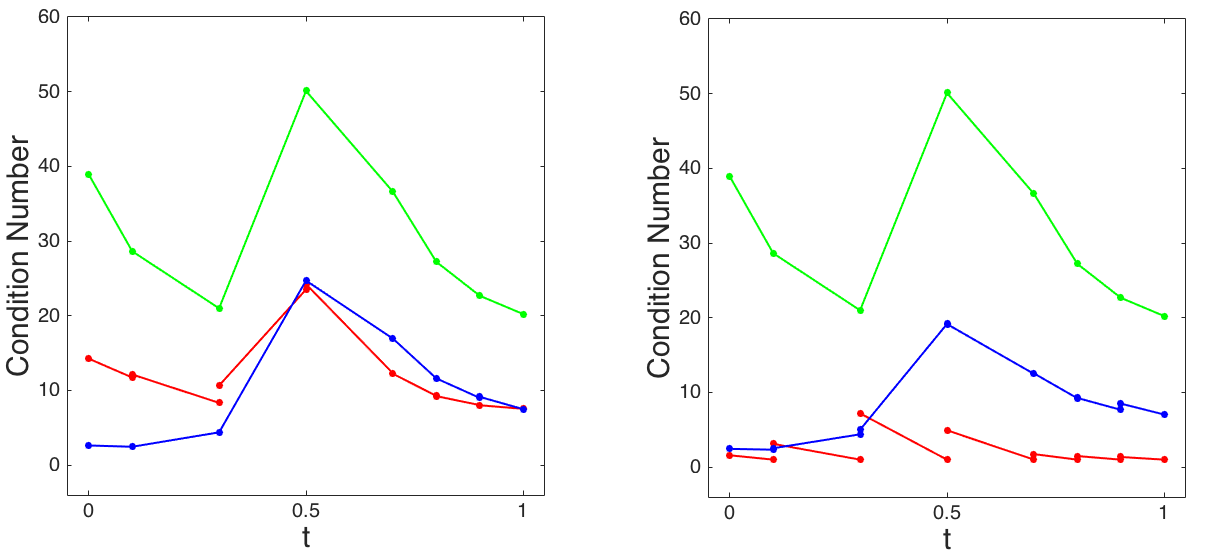}

~~~~(a) \hspace{2.37in} (b)

\caption{Plot of the condition numbers
along the first path.  (a) Using a fixed randomization,
Green: fixed affine patch, Red: orthogonal affine patch,
Blue: coordinate-wise affine patch. 
(b) Green: fixed randomization with fixed affine patch, 
Red: pseudoinverse randomization with orthogonal affine patch,
Blue: leverage score randomization with coordinate-wise affine patch.}
\label{fig:TwistedCubic1}
\end{figure}

\begin{figure}[!ht]
\centering
\includegraphics[scale=0.47]{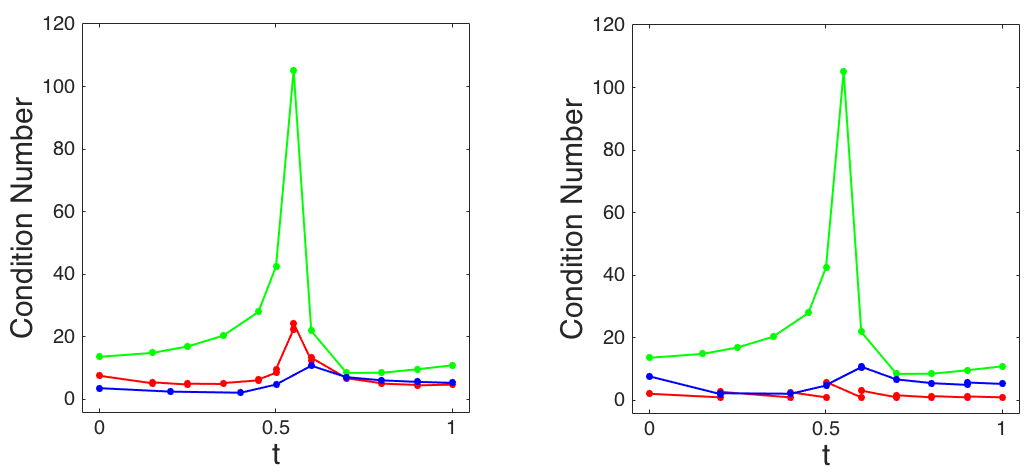}

~~~~~~~(a) \hspace{2.4in} (b)

\caption{Plot of the condition numbers
along the second path.  (a) Using a fixed randomization,
Green: fixed affine patch, Red: orthogonal affine patch,
Blue: coordinate-wise affine patch. 
(b) Green: fixed randomization with fixed affine patch, 
Red: pseudoinverse randomization with orthogonal affine patch,
Blue: leverage score randomization with coordinate-wise affine patch.}
\label{fig:TwistedCubic2}
\end{figure}

\begin{figure}[!ht]
\centering
\includegraphics[scale=0.4]{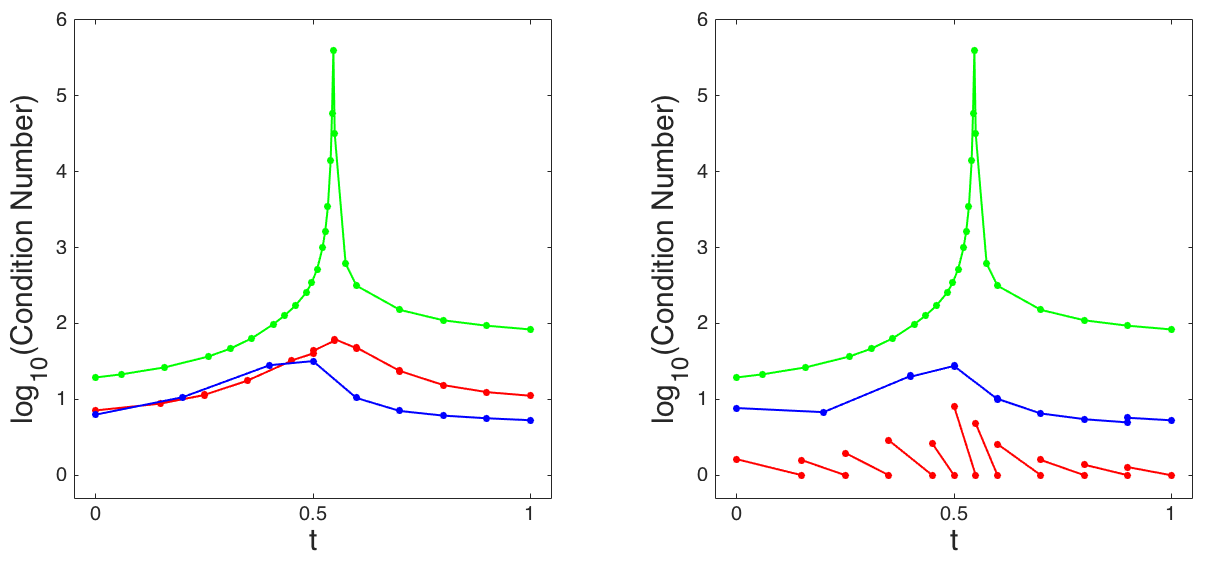}

~~~~~(a) \hspace{2.37in} (b)

\caption{Plot of the logarithm of the condition numbers
along the third path.  (a) Using a fixed randomization,
Green: fixed affine patch, Red: orthogonal affine patch,
Blue: coordinate-wise affine patch. 
(b) Green: fixed randomization with fixed affine patch, 
Red: pseudoinverse randomization with orthogonal affine patch,
Blue: leverage score randomization with coordinate-wise affine patch.}
\label{fig:TwistedCubic3}
\end{figure}
\end{example}

\section{Truncation}\label{Sec:Truncation}

In many applications, one is interested
in computing real solutions.  For a parameterized
system, we are typically deforming from a complex 
parameter value $p^*$ to a real parameter value~$\hat{p}$
and we would like a heuristic approach that could help identify
which paths are headed to nonreal endpoints.
The paths with (potentially) nonreal endpoints will be truncated
to limit wasted~computation.  

The idea of our proposed test is to consider 
how the size of the imaginary part of the points on the path
is changing with respect to $t$.  Since real endpoints
have imaginary part equal to $0$, we want to make a heuristic
decision based on the data along the path to decide
if the imaginary part could reasonably limit to $0$.
If this is not reasonable, then we consider the path to be heading towards a nonreal endpoint.  There is a trade-off between
when to start applying this test.  If the test is applied far from $t = 0$, then the imaginary part will be significantly 
impacted by the starting parameter $p^*$.  If the test is applied
very close to $t = 0$, then there is little computational savings in truncation.  In our experiments, we start testing
when $t < 0.3$.  
See Algorithm~\ref{Alg:PathTrack} for the location of truncation
in the path tracking algorithm.

Our proposed truncation test takes as input 
two points along the path, say $x(t_1)$ and $x(t_2)$ where $0 < t_2 < t_1$
and makes a decision based on the angle between the following two lines:
\begin{itemize}
\item line connecting $(t_1,\|\imag x(t_1)\|_2)$ and $(t_2,\|\imag x(t_2)\|_2)$, and
\item line connecting $(t_2,\|\imag x(t_2)\|_2)$ and $(0,0)$
\end{itemize}
as shown in Figure~\ref{fig:EarlyTrunc}.
A large angle suggests that it is realistic to believe
that the path is heading towards a nonreal endpoint.
In our testing, we considered ``large'' to be at least
$\frac{5\pi}{6}$ in which case that path was truncated
from further computation.

\begin{figure}[!ht]
\centering
\begin{subfigure}{.38\textwidth}
\includegraphics[scale = 0.368]{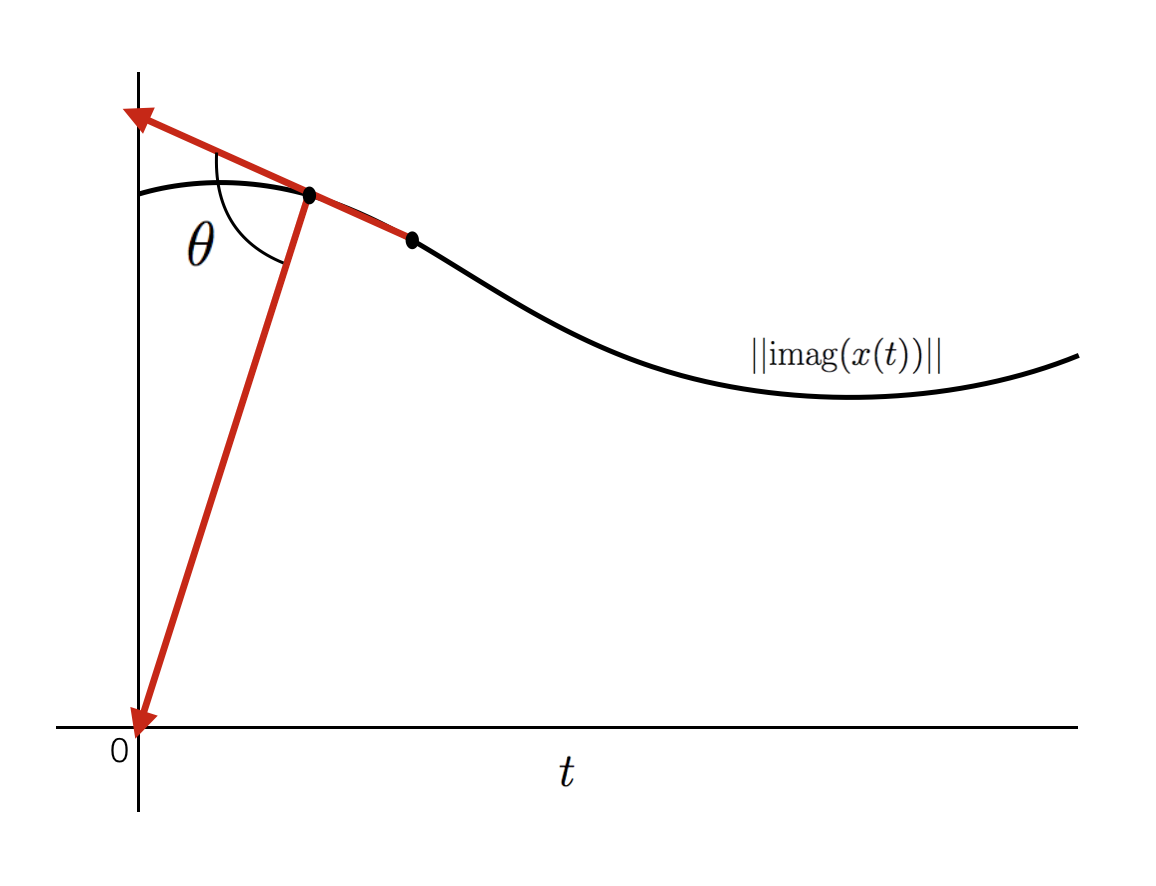}
\end{subfigure}
\hspace{10mm}
\begin{subfigure}{.38\textwidth}
  \includegraphics[scale = 0.36]{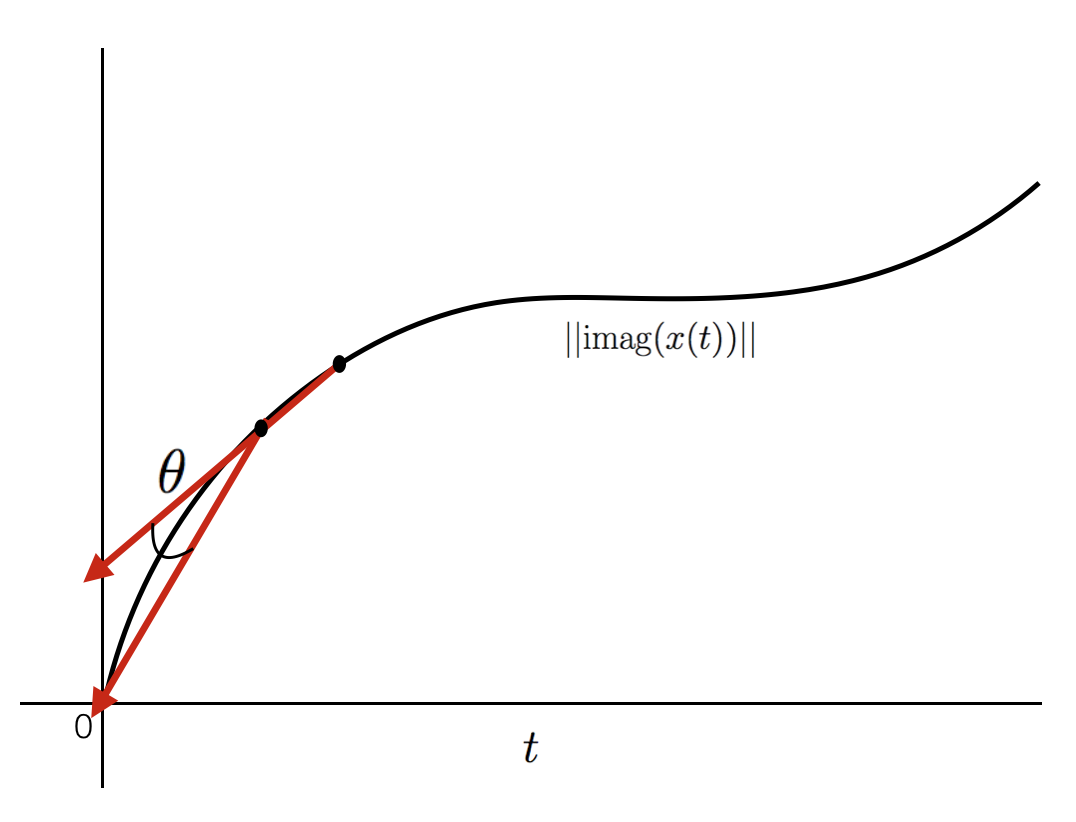}
\end{subfigure}

~~~~~~~(a) \hspace{2.5in} (b)

\caption{Schematic representation of (a) large angle leading toward a non-real solution and (b) small angle that leads to a real solution.}
\label{fig:EarlyTrunc}
\end{figure}

\section{Applications in computer vision}\label{Sec:Vision}

In order to demonstrate the three aspects of homotopy
continuation investigated, namely adaptive affine coordinate patches, adaptively selected well-constrained subsystems,
and truncation for paths heading to nonreal solutions, 
we consider two problems in computer vision.  
These problems are so-called {\em minimal problems}
in computer vision in that they generically have finitely 
many solutions with the current techniques
unstable for small problem sizes as well as require many assumptions and simplifications \cite{Kukelova,KukelovaPajdla}.  
We describe two minimal problems in Sections~\ref{Sec:5pt} 
and~\ref{Sec:6pt}, with Section~\ref{Sec:Results}
presenting the computational results.
 
\subsection{5-point problem}\label{Sec:5pt}

\begin{figure}
	\centering
	\includegraphics[height=2in, width=3.5in]{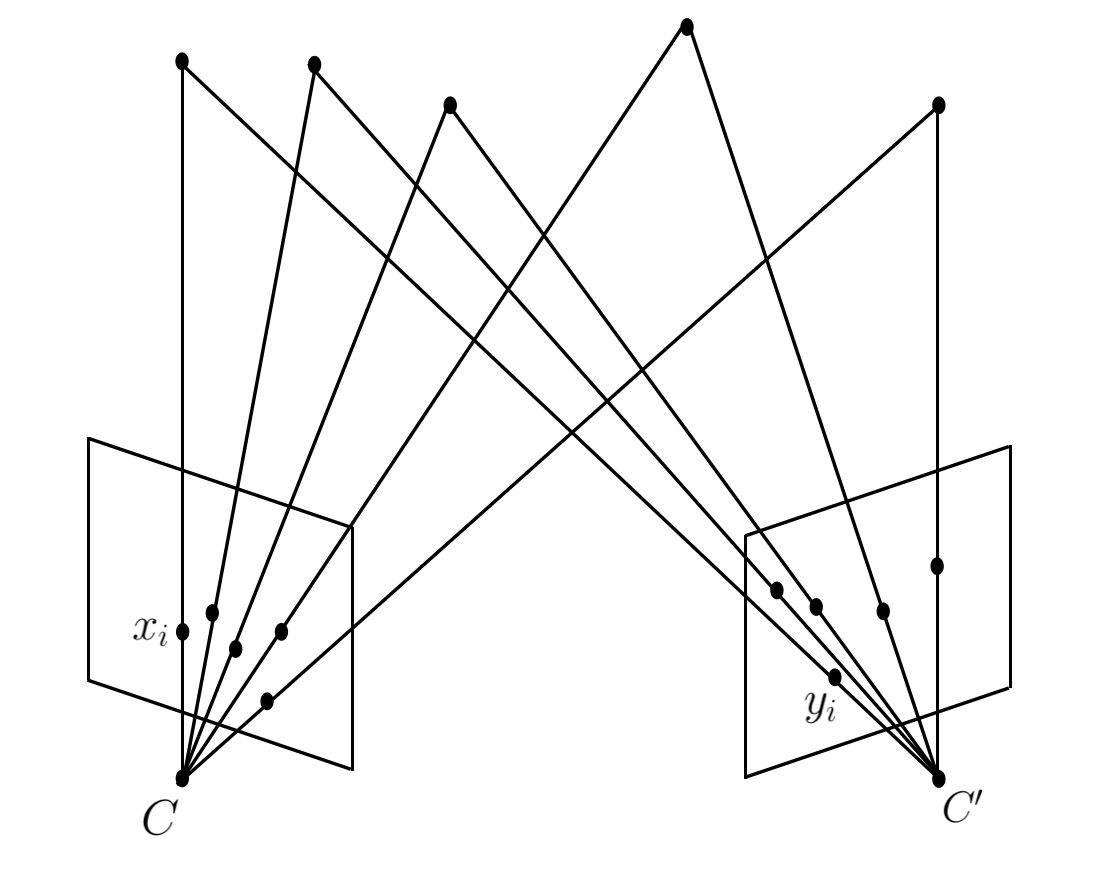}
	\caption{Schematic drawing of the 5-point image reconstruction problem.}\label{fig:FivePoint}
\end{figure}

The 5-point problem involves two cameras where
both images have 5 corresponding points mapping 
from a 3D object in space \cite{Kruppa,Faugeras,Nister,Stewenius1}.   
With the 5 corresponding point pairs $x_i$ for camera $C$ and 
$y_i$ for camera $C'$ as shown in Figure~\ref{fig:FivePoint}, this problem computes the relative position and orientation of two calibrated cameras using
the polynomial system:
\[\left[\begin{array}{cc}
            2EE^TE - \textrm{trace}(EE^T)E & \\
            y_i^TEx_i &  i = 1,\dots,5
\end{array}\right].\]
As written, this system consists of 14 polynomials defined on $\bP^8$ so that it is an overdetermined parameterized system
defined on projective space which generically has 10 solutions.

\subsection{6-point problem}\label{Sec:6pt}

\begin{figure}
	\centering
	\includegraphics[scale = 0.5]{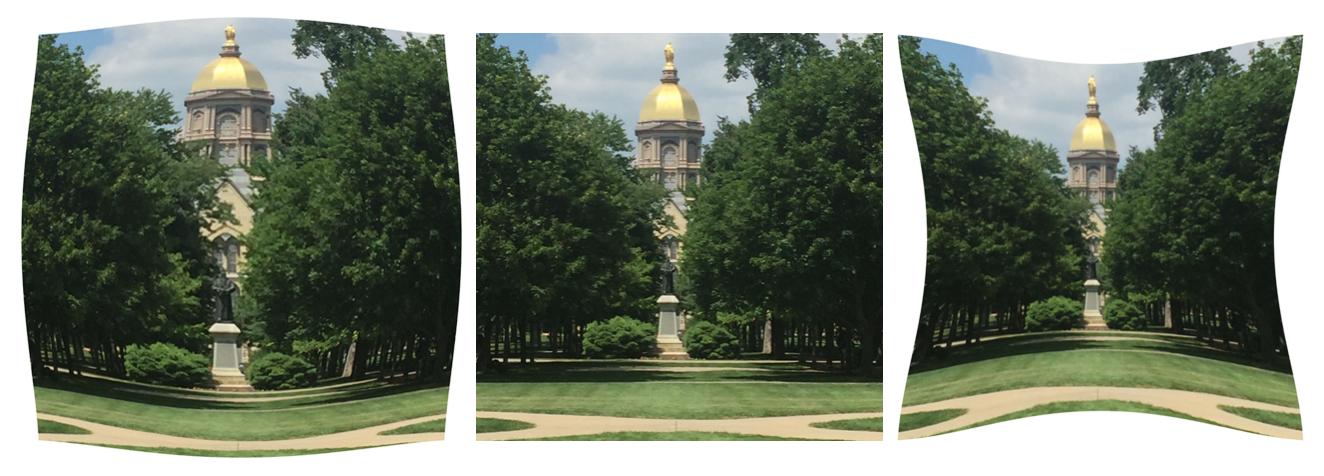}
	\caption{Image reconstruction with radial distortion.}
	\label{fig:SixPoint}
\end{figure}

The 6-point problem involves two cameras where
both images have 6 corresponding points mapping 
from a 3D object in space with a radial distortion parameter 
$\lambda$ \cite{Stewenius2,KukelovaPajdla,Byrod}.
Figure~\ref{fig:SixPoint} shows images which have radial distortion.
With the 6 corresponding point pairs $x_i$ for camera $C$ and 
$y_i$ for camera $C'$, 
this problem computes the relative position and orientation of two 
calibrated cameras using the polynomial system:
\[\left[\begin{array}{cc}
            2EE^TE - \textrm{trace}(EE^T)E & \\
            p_i(\lambda)^TEq_i(\lambda) & i = 1,\dots,6
\end{array}\right]\]
where
\begin{equation}
p_i(\lambda) = \left[ \begin{array}{cc}
y_i \\
1 + \lambda||y_i||_2^2
\end{array}\right] \hspace{8mm}\hbox{and}\hspace{8mm}
q_i(\lambda) = \left[ \begin{array}{cc}
x_i \\
1 + \lambda||x_i||_2^2
\end{array}\right].
\end{equation}
As written, this system consists of 15 polynomials defined
on $\bP^8\times\bC$ so that it is an overdetermined
parameterized system defined on a product
of a projective space and an affine space which
generically has 52 solutions.

\subsection{Results}\label{Sec:Results}

We implemented Algorithm~\ref{Alg:PathTrack} 
to solve the 5-point and 6-point problems\footnote{Available at \url{http://dx.doi.org/10.7274/R0C53HXK}.} using
homotopy continuation with the data resulting from
solving 100 random instances 
are summarized Tables~\ref{tab:5pt} and~\ref{tab:6pt}, respectively.  In Algorithm~\ref{Alg:PathTrack}, the predictor-corrector step we utilized was 
the classical $4^{\rm th}$ order Runge-Kutta predictor
with a maximum of 3 Newton iterations.  
These tables summarize average number of steps per path,
average number of arithmetic operations per parameter instance
to be solved, and average time (in seconds) per instance
for various patching (Section~\ref{Sec:Patch})
and randomization (Section~\ref{Sec:Randomization}) 
strategies together with path truncation (Section~\ref{Sec:Truncation}).  

\begin{table}
\centering
\begin{tabular}{l | c | c | c | c | c | c}
 & FP/FR & OP/FR & CWP/FR & OP/PIR & CWP/LSR & CWP/LSR/ET  \\
\hline \hline
Avg. \# steps/path & 14.127 & 11.261 & 10.026 & 10.859 & 9.224 & 8.338 \\ 
Avg. \# operations & 53,646 & 53,886 & 34,733 & 41,038 & 30,827 & 44,908\\
Avg. time (sec) &  0.3641 & 0.4449 & 0.3611 & 0.4637 & 0.2862 & 0.2608
\end{tabular}
\caption{Summary of solving $100$ random instances of the 5-point problem}
\label{tab:5pt}
\end{table}

\begin{table}
\centering
{\small
\begin{tabular}{l | c | c | c | c | c | c}
 & FP/FR & OP/FR & CWP/FR  & OP/PIR & CWP/LSR & CWP/LSR/ET \\
\hline \hline
Avg. \# steps/path & 29.667 & 23.361 & 24.717 & 20.224 & 21.721 & 15.746 \\ 
Avg. \# operations & 11,649,000 & 9,029,600 & 7,289,400 & 7,740,500 & 6,340,300 & 4,700,900\\
Avg. time (sec) &  5.810 & 4.526 & 5.302 & 4.259 & 4.832 & 3.707
\end{tabular}
}
\centering
\caption{Summary of solving $100$ random instances of the 6-point problem}
\caption*{FP = fixed random patch (Section~\ref{Sec:GlobalRandomPatch}), 
OP = adaptive orthogonal patch (Section~\ref{Sec:Orthogonal}), 
CWP~=~adaptive coordinate-wise patch (Section~\ref{Sec:LocalCoordPatch}), 
FR = fixed randomization (Section~\ref{Sec:FixedRandomization}), 
PIR~=~adaptive pseudoinverse randomization (Section~\ref{Sec:PseudoinverseRandomization}), 
LSR = adaptive leverage score randomization (Section~\ref{Sec:LeverageRandomization}), and 
ET = early truncation (Section~\ref{Sec:Truncation}).}
\label{tab:6pt}
\end{table}

The results show that the combination of 
adaptive coordinate-wise patching, 
leverage score randomization, and early truncation 
had both the lowest average number of steps per path
and average solving time for both the 5-point and 6-point problems.  
In particular, for the 5-point problem, 
this combination yielded about a $40\%$ decrease in average number of steps per path over tracking using 
a fixed random patch with fixed randomization.  For the 6-point problem, the decrease
was $47\%$ due to the larger size and more paths
which could be truncated.  
In particular, every adaptive method results in
reducing the average number of steps per path
compared with using a fixed random patch with 
fixed randomization.

\section{Conclusion}\label{Sec:Conclusion}

Three aspects of using homotopy continuation to solve parameterized 
systems were investigated: selecting affine coordinate patches, 
selecting well-constrained subsystems, and truncating paths
which appear to be ending at nonreal solutions.  
The results in Section~\ref{Sec:Vision} demonstrate
substantial improvement by using adaptive selections
over fixed random choices in path tracking.  

\section{Acknowledgments}\label{Sec:Acknowledgments}

The authors would like to thank Sameer Agarwal for his expertise in minimal problems, their implementation, and suggestions regarding the use of numerical algebraic geometry.  We would also like to thank Alan Liddell, Ilse Ipsen, and Tim Kelley for helpful discussions.

\newcommand{\noopsort}[1]{}\def\cprime{$'$}

\end{document}